\DeclareMathOperator*{\minimize}{min}
\newtheorem{thm}{Theorem}
    \parbox{\textwidth}{\hspace{15pt}#1#2#3}
\author{Evan M. Drumwright}
\institute{Toyota Research Institute}
\begin{document}

% paper title
\title{An Unconditionally Stable First-Order\\ Constraint Solver for Multibody Systems}

\maketitle
\begin{abstract}
This article describes an absolutely stable, first-order constraint solver for multi-rigid body systems that calculates (predicts) constraint forces for typical bilateral  and unilateral constraints, contact constraints with friction, and many other constraint types. Redundant constraints do not pose numerical problems or require regularization. Coulomb friction for contact is modeled using a true friction cone, rather than a linearized approximation. The computational expense of the solver is dependent upon the types of constraints present in the input. The hardest (in a computational complexity sense) inputs are reducible to solving convex optimization problems, i.e., polynomial time solvable. The simplest inputs require only solving a linear system. The solver is L-stable, which will imply that the forces due to constraints induce no computational stiffness into the multibody dynamics differential equations. This approach is targeted to multibodies simulated with coarse accuracy, subject to computational stiffness arising from constraints, and where the number of constraint equations is not large compared to the number of multibody position and velocity state variables. For such applications, the approach should prove far faster than using other implicit integration approaches. I assess the approach on some fundamental multibody dynamics problems.
\end{abstract}

\section{Introduction}
A constrained multi-body system is a collection of bodies subject to kinematic constraints, dynamic constraints, or both. Typical examples are robots, automobile suspensions, and containers of granules. The kinematics and dynamics of constrained multi-body systems  can be formulated explicitly, by use of judiciously selected coordinates, or implicitly, through the use of constraint forces that must be computed to satisfy certain algebraic equations. Depending on the model and the input (state and any control inputs to the model), accounting for the effect of those constraints on the dynamics is often very expensive and potentially results in computational stiffness in the application (simulation, trajectory optimization, etc.)  This article focuses on efficient solutions to initial value problems for multi-body systems subject to such implicit constraint equations, i.e., it focuses on simulating interesting multibody systems.

A constraint solver computes the aforementioned constraint forces (and through a simple linear operation, accelerations) acting on a multi-body system at a given state, under some modeling assumptions and using mathematical programming or nonlinear optimization algorithms (like solving complementarity problems).  The notion of a constraint solver arises from the observation that the dynamics of a constrained multibody system is coupled to its constraint forces but these problems can actually be decoupled into those of \1 computing the constraint forces via a constraint solver, i.e., solving an equation that looks like $G\inv{M}\tr{G}\lambda = -G\inv{M}f - \dot{G}v$ for $\lambda$ and \2 computing the multibody accelerations by solving an equation that looks like $M\dot{v} = \tr{G}\lambda + f$ for $\dot{v}$.

Constraint solvers are frequently used to help integrate multi-rigid body simulations with rigid constraints. They are used for modeling contact (and all other constraints) in engineering simulations, e.g., time-stepping based \texttt{Algoryx} and \texttt{Vortex} and event-based \texttt{SimWise 4D}.  Constraint solvers have been applied to similar problems in control, state estimation, and system identification applications.  Yet constraint solvers have suffered to date from problems both conceptual in the model (i.e., \emph{inconsistency} and \emph{indeterminacy}, inability to ``invert'' the model for optimizing control under constraints, non-smoothness of the model) and practical (i.e., \emph{constraint drift}, difficulty of solving complementarity problems quickly and robustly).

This article introduces a constraint solver that calculates constraint forces under the realistic model that constraints are all at least minimally compliant (specifically, that they are no more stiff than representable using double precision floating point). Admitting some compliance, even a nominal amount, into the existing ``hard'' constraint formulations causes the aforementioned conceptual and practical challenges to disappear and the problem instances to become computationally easy to solve.  And while \emph{computational stiffness} has obstructed the use of very stiff constraints for simulation, I will show how the present approach is not susceptible to this problem, by design. As an additional bonus, the constraint solver is L-stable: \emph{any} size integration step can be used to simulate a constrained system without the constraint forces giving rise to instability.

My constraint solver requires only one linear system solution or optimization per time interval; if optimization is required, the optimization problem is convex, even when modeling friction with a true cone, and the model does not require the discretization of the time interval to shrink for convergence.  And, that single optimization is relatively inexpensive and robust, since finding a feasible point is trivially accomplished by setting some variables to ``large'' and the rest to zero. Sparsity in the problem is common and readily exploited. The constraint solver enjoys several other computational advantages. It does the minimum ``collision detection'' work (i.e., a single evaluation) per timestep and offers up to first-order accuracy. It is easy to construct smooth constraints for input to the model and thereby ensure that the constraint solver is differentiable. Finally, the computational implementation is straightforward.

\subsection{Approach overview}
Softening the ``hard'' constraints for multibody systems into compliant constraints (like those in the penalty model) \emph{cause the constraints to become dynamical systems}. The constraints then couple motion to, and are influenced by, a multi-body system. One way to model the static and dynamic behavior of these compliant constraints is with multiple spring-dampers. Accordingly, I will describe a stable and efficient numerical approach that:
\begin{enumerate}
\item Given deformations $\phi(q)$ and deformation rates $\dot{\phi}(q, v)$ for the spring-dampers (i.e., the constraint system state variables) as functions of multibody state $\{ q(t), v(t) \}$ at time $t$,
\item computes the spring-damper forces implicitly using \emph{future} deformation and deformation rates (i.e., those at $t + h$, for small $ h$),
\item and applies these computed spring-damper forces in a fast, explicit manner to integrate the multibody state to $\{ q(t+h), v(t+h) \}$.
\end{enumerate}
I will show how the implicit Euler approach is particularly efficient at integrating multi-spring-damper systems that are computationally stiff and potentially computationally stiff.  I will also show how decoupling the multi-body and spring-mass systems, which makes their independent solution possible, still yields solutions with only second-order truncation error.

I will not describe how to model physical phenomena using the constraints. I assume that the user can define and evaluate spring-damper type constraint functions. But I will also demonstrate how various kinds of constraints can be modeled: joint constraints, range-of-motion limit constraints, and contact constraints---using both Hertzian contact and elastic foundation models---with Coulomb friction.

\subsection{Focus application: multi-body simulation}
This article will focus on the application of the constraint solver to the simulation problem:  compute $q(t + h)$, $v(t + h)$ and constraint forces $\lambda(t)$ given $q(t)$, $v(t)$, and $h$. These data are a superset of the necessary data for control, state estimation, and system identification purposes. Control applications, for instance, might require only computing $\lambda(t)$ if $q(t+h)$ and $v(t+h)$ may be obtained to high accuracy from sensor measurements. In any case, efficient application of the constraint solver to simulation implies efficient application to the control, state estimation, and system identification domains as well. We will see throughout this article that simulations using the constraint solver are most efficient---compared to other simulation approaches, like applying implicit integrators to the normally stiff differential equations arising from constraints---when the number of constraints is not large compared to the number of multibody position and velocity state variables. \S\ref{section:unilateral-msd} will show a pathological counterexample (900 constraints vs. 13 state variables), in which the best-case performance of the constraint solver was not as good as optimized rivals, but \emph{its worst-case performance was far better} than theirs.

\subsection{Outline}
The layout of this article follows. Section~\ref{section:related-work} surveys related work in modeling and solving constraints and the use of these models in control, state estimation, system identification, and simulation applications. Section~\ref{section:spring-mass-damper} examines the spring-mass-damper system, which is the foundational model for the present work, and considers various first-order approaches for integrating the system dynamics. Section~\ref{section:holonomic-constraints} uses the stable first-order approach examined in Section~\ref{section:spring-mass-damper} to integrate the dynamics of a multi-rigid body system constrained using only bilateral constraints. Section~\ref{section:unilateral-msd} extends this approach to a unilaterally constrained system (e.g., a robot hitting a range-of-motion limit). Section~\ref{section:modeling-contact} extends the approach  yet again, this time to contact constraints with friction. Section~\ref{section:discussion} closes with concluding thoughts.

\section{Related work}
\label{section:related-work}
This section surveys two existing kinds of constraint solvers: ones that model constraint forces as a function of deformation (``penalty methods'', \S\ref{section:related-work:penalty}) and ones that compute forces to satisfy kinematic and force constraints (\S\ref{section:related-work:rigid-constraints}). \S\ref{section:related-work:constraint-applications} additionally surveys applications of constraint solvers to control, state estimation, system identification, and simulation domains.

\subsection{Penalty methods}
\label{section:related-work:penalty}
There is a vast literature covering compliant contact models, which are sometimes known as penalty methods. In the context of compliant contact models, deformation results in a proportional force. In the context of optimization via penalty methods, deformation can be viewed as the violation of a constraint, and solving the resulting DAE by treating it as an ODE with virtual penalization forces is effected using a kind of dynamic optimization. These virtual forces cause the constraint violations to be minimized dynamically (over time) in the absence of loads; in other words,   
the behavior of the system as it is stabilizing can be viewed analogously to the penalty method class of optimization algorithm. Summarizing, there are two very different approaches that both effect compliant contact. And the constraint solver introduced in this article draws from both approaches by modeling contact from first principles and by leveraging constrained optimization.  

The contact aspects of the constraint model introduced in this article are effectively that of the Song et al.~\cite{Song:2000a} contact model, though I introduce numerous computational and modeling advantages over the approach they describe. Mine does not require categorizing contact into sticking and sliding. My approach permits using realistic stiffness values derived via engineering tables and without needing to consider whether the choice of modeling parameters might make the simulation unstable.

\subsection{Rigid constraints and rigid contact}
\label{section:related-work:rigid-constraints}
Approaches that compute forces to satisfy kinematic constraints and force constraints have traditionally been applied to modeling \emph{rigid constraints}. Perfectly rigid constraints often simplify multi-rigid body models in several respects, all of which are conferred through coordinate reduction: revolute and prismatic joints, to name but two examples, use interpretable, computationally efficient, and kinematically simple independent parameters. But the concept of rigid constraints can introduce some theoretical and practical challenges as well. As just one example, constraint forces can be distributed arbitrarily if care is not taken when constraints are both rigid and redundant; Zapolsky and Drumwright had to work around this particular problem in the design of a constraint-aware inverse dynamics controller for quadrupedal robots~\cite{Zapolsky:2015c}. The constraint solver introduced in this article is also able to model bilateral constraints, and it allows them to be ``rigid'' up to full floating point precision; put another way, constraints can be modeled as rigid to a greater extent than is observed in nature. For example, the Young's Modulus of diamond is on the order of a terapascal, which only requires twelve of the approximately sixteen decimal digits of precision provided by 64-bit floating point numbers, assuming that the pascal is the chosen unit.

A ``rigid [point] contact'', i.e., a point of contact between two completely rigid bodies, corresponds to a unilateral rigid constraint and several more equations that describe the frictional interaction at that point (see, e.g.,~\cite{Trinkle:1997}). Rigid contact makes reasoning about the contact geometry simple, but the problems from redundant forces described above (also called ``indeterminacy'') remain. The conceptual problem of ``inconsistent configurations'', like Painlev\'{e}'s famous paradox (described in detail in~\cite{Stewart:2000a}) emerges with rigid contact also. And rigid contact is non-smooth: force is discontinuous when two bodies first contact, for example. In addition to these challenges that arise from the mathematics (and likely the real physics as well, see, e.g.~\cite{Collins:2001}), the computational demands of implementing such models are significant.

Specifically, it is challenging along multiple dimensions to solve the complementarity problems that emerge from the rigid contact formulations. Solutions are known to be non-unique.\footnote{The effect of that non-uniqueness on multi-body dynamics software is unknown, and that lack of knowledge is troubling given the potentially large number of software implementations using this approach for engineering purposes.} Existence proofs only exist for problem approximations (i.e., discretization of the continuous time dynamics, polygonal friction cones). Even for the approximate problems that are known to possess solutions (\cite{Stewart:1996,Anitescu:1997}) that can be found with an algorithm (i.e., solving linear complementarity problems with Lemke's Algorithm), the worst-case time complexity is exponential. Additionally, the necessary algorithms do not lend themselves to robust numerical implementations: it is hard to guarantee solutions to tight accuracy (satisfying the non-negativity and complementarity conditions to tight tolerances), particularly when the complementarity problems become ``degenerate'' from redundancy in the constraints (see, e.g.,~\cite{Cottle:1992}, p.~340). Without accuracy guarantees, it is challenging for the solver to even identify when it has solved the problem.

In addition to these theoretical and practical challenges, implementation introduces several more wrinkles: \1 Coulomb friction requires selecting a typically arbitrary tolerance for distinguishing between sliding and rolling/not-sliding. \2 The rigid contact model requires distinguishing between impacting and sustained contact (using another arbitrary tolerance); the former mode requires applying an appropriate multi-body \emph{impacting constraint} model (all constraints, including contact constraints with friction, range of motion constraints, etc., are potentially active during an impact, so an impact model that accounts for all of these is necessary). \3 Considerable thought is necessary to identify the minimum ``states'' (as in the sense of a finite state machine; these kinds of states are typically called ``modes'') for a multi-point rigid constraint model, and the logic to switch between the various modes is incredibly complex. I'm unaware of a peer-reviewed description of the necessary logic in existing literature.

Drumwright and Shell~\cite{Drumwright:2010b} and Todorov~\cite{Todorov:2011} both described modified rigid contact models toward alleviating these challenges. Unfortunately, neither model was capable of producing solutions consistent with non-impacting (i.e., ``sustained'') contact mechanics, which requires complementarity. As already noted, the model described in this article is based on an established compliant model of contact that has not been controversial. Still, my approach allows the model to be effectively rigid, i.e., rigid to the 16 decimal digits that IEEE 754 double precision allows, but without any of the aforementioned computational headaches. Additionally, this newly introduced approach is easier to solve than the models from~\cite{Drumwright:2010b,Todorov:2011}, both of which require solving a positive semi-definite (PSD) linear complementarity problem (or, equivalently, a PSD quadratic program) to compute an initial feasible point.

\subsection{Applications of constraint solvers}
\label{section:related-work:constraint-applications}
Constraint solvers have been applied in multiple domains, namely control, state estimation, system identification, and simulation. This section surveys applications of constraint solvers in these areas. Note that a constraint solver need not exist as an explicit code module; the constraint solver for a simulation with compliant contact can be considered to be the combination of the contact model and the ODE initial value problem solver (i.e., the integrator).

\subsubsection{Constraint solvers for control}
\label{section:related-work:constraint-models-for-control}
Constraint solvers for control have been investigated heavily in the context of robotics under the name ``whole body control''. The basic formula has been to control a robot subject to contact constraints; other kinds of constraints can usually be incorporated into these formulations, but contact has been the traditional focus. Representative works include those of Todorov~\cite{Todorov:2011}, Escande et al.~\cite{Escande:2012}, Feng et al.~\cite{Feng:2015}, and Zapolsky and Drumwright~\cite{Zapolsky:2015}. Rigid contact and Coulomb friction models are typically used, implying either a differential algebraic equation or a differential complementarity problem mathematical model that is a function of control inputs $u$, e.g.:
\begin{align}
\dot{x} & = f(x, u) \\
0 & = g(x)
\end{align}
Inverting this model---computing $u$ given $x$ and $\dot{x}$---is computationally expensive when $f(.)$ and $g(.)$ incorporate contact constraints. The inverse is not required to be unique~\cite{Zapolsky:2015}.

The presented constraint solver does not require regularization (cf.~\cite{Todorov:2011}), does not assume rigidity (cf.~\cite{Escande:2012,Kuindersma:2015,Feng:2015}), can use a true friction cone (cf.~\cite{Zapolsky:2015}), is always solvable (i.e., inconsistent constraints are not of concern) with a unique solution, and does not violate physical laws (cf.~\cite{Todorov:2011}).

\subsubsection{Constraint solvers for state estimation}
\label{section:related-work:constraint-models-for-estimation}
Estimating state (position and velocity) of contacting rigid bodies has been investigated in works by Zhang et~al. \cite{Zhang:2013b}, Pollard~et~al.~\cite{Pollard:2013}, Kuindersma~et~al.~\cite{Kuindersma:2015}, and Kumar~et~al.~\cite{Kumar:2016}. The promise is to use contact mechanics (or a gross approximation thereof) to compensate for the poor observability of object state while objects are being manipulated. Estimating state for contacting bodies has proven to be a particularly difficult problem, and the deficiencies in prior process models is just one of the challenges to be overcome~\cite{Koval:2015}; Koval~et~al. note ``particle starvation'' as one such significant challenge~\cite{Koval:2015}. However, it is clear that better (faster, more accurate) process models would be beneficial to state estimation; Zhang~et~al.'s complementarity-problem-based state estimator was shown to be too slow for real-time, even when applied to a robotic hand modeled with only three point contacts.

\subsubsection{Constraint solvers for system identification}
\label{section:related-work:constraint-models-for-sysid}
System identification is the process of estimating parameters for a plant (i.e., the ``process model'' or ``transition model'' in the context of state estimation) from telemetry data. Constraint solvers can be employed in the system identification process to estimate parameters of the contact model in particular (see, e.g.,~\cite{Fazeli:2017b} where impact restitution is estimated and~\cite{Kolev:2015,Fazeli:2017b} where the friction coefficient is estimated).  The presented constraint solver provides smooth gradients that are useful for the optimization process used for system identification.

\subsubsection{Constraint solvers for simulation}
\label{section:related-work:constraint-models-for-simulation}
Constraint solvers for simulation exhibit particular challenges not present in the aforementioned domains. Compliant models tend to lead to \emph{computational stiffness}, which limits simulation step sizes for reasons of stability rather than accuracy. Simulations with rigid contact constraints either use an event-driven scheme or are only zeroth-order accurate (i.e., the ``velocity stepping'' approaches), and simulations using rigid contact models tend to generate many events. Commercial software products have spanned all three of the approaches I described above: MSC's \texttt{Adams} is known to use a compliant model, \texttt{SimWise 4D} is known to use an event-driven scheme with a rigid contact model, and \texttt{Havok} is known to use the zeroth-order rigid contact approach.

\subsection{Summary}
We have now seen how constraint solvers can be used in control, state estimation, system identification, and simulation applications, and we have examined the dichotomy of constraint solvers: ``soft'' (compliant) and ``hard'' (rigid). We will next examine the simple mass-spring-system that serves as the physical model of our soft constraints.

\section{The Spring-Mass-Damper system}
\label{section:spring-mass-damper}
 
The foundational model of all constraints in this article is that of the mass-spring-damper system:
\begin{align}
m\ddot{x} + b\dot{x} + kx = f.
\end{align}
Various types of forces can be modeled by using either the dynamics of this system or through the equivalent relationship:
\begin{align}
\lambda = -kx -b\dot{x}.
\end{align}
As just one example, the mass-spring-damper readily emulates linearly elastic contact, an oft reasonable model.

The following subsections will investigate how to integrate multibody systems that are subject to such constraints (i.e., have forces applied from such a forcing model) under a parameter range of $k$ that would correspond to springs constructed from various materials. As examples, longitudinal forces acting on a steel beam with cross-sectional area $1\ \textrm{m}^2$ and $1$ m length would exhibit a stiffness ($k$ in N/m) equivalent to the Young's Modulus of steel ($\approx 10^{12}$~Pa). The same beam made from stiff rubber might have a Young's Modulus of $10^{5}$~Pa. So, $k$ must be able to take on a rather large range. Setting damping/dissipation values in a principled way from material properties is an open problem: damping/dissipation is currently used to achieve a qualitative effect, even in accuracy-critical FEM codes. But it seems reasonable to require that the viscous damping parameter lies in the range $b \in [0 \frac{\textrm{N} \cdot \textrm{s}}{\textrm{m}}, 10^{12} \frac{\textrm{N} \cdot \textrm{s}}{\textrm{m}}]$.

We next examine how various first-order schemes integrate this spring-mass-damper system forward in time. If a particular integration scheme is incapable of simulating the spring-mass-damper system under the targeted parameters with reasonably large step sizes, that same integrator will be unable to simulate a multi-body system subject to dynamic constraints with similar stiffnesses.

\subsection{Stable first-order integration of a spring-mass-damper system}
\label{section:stable-msd-integration}
The canonical first-order formulation of the mass-spring-damper system is:
\begin{align}
v & = \dot{x} \\
\dot{v} & = \frac{f - bv - kx}{m}.
\end{align}
The various Euler integration schemes simply represent various ways to discretize $x(t_1)$ and $v(t_1)$ to a first-order approximation of the continuous time dynamics around $x(t_0)$ and $v(t_0)$, where $t_1 = t_0 + h$. For shorthand, we'll write $x_0$, $v_0$, etc. to refer to a particular state variable at a particular point in time.

\subsubsection{Explicit Euler}
The explicit Euler update rule takes the following form:
\begin{align}
x_1 & = x_0 + h v_0 \\
v_1 & = v_0 + \frac{h}{m} (f -bv_0 - kx_0),
\end{align}
meaning that for $f = 0$ the problem can be treated as a linear difference equation:
\begin{align}
y_1 & = Ay_0,
\end{align}
where $y \equiv \tr{\begin{bmatrix} x & v \end{bmatrix}}$ and the \emph{iteration matrix} is defined as:
\begin{align}
A \equiv \begin{bmatrix}
1 & h \\
\frac{-hk}{m} &\ 1 - \frac{hb}{m}
\end{bmatrix}.
\end{align}
The iteration matrix's eigenvalues shows that the approach is not stable for \emph{any} step size (for stability, the magnitudes of all eigenvalues, real and imaginary, must be less than unity). Figure~\ref{fig:explicit-euler} shows that explicit Euler is clearly unstable for $h = 10^{-5}, k = 10^6, b = 1$, somewhat more stable for $h = 10^{-5}, k = 10^6, b = 10$, stable for $h = 10^{-5}, k = 10^6, b = 2 \times 10^5$, and unstable for $h = 10^{-5}, k = 10^6, b = 2.1 \times 10^5$. For the range of parameters we wish to use for $k$ and $b$, explicit Euler is clearly unsuitable. 

\begin{figure}[htpb]
    \centering
\includegraphics[width=.625\linewidth]{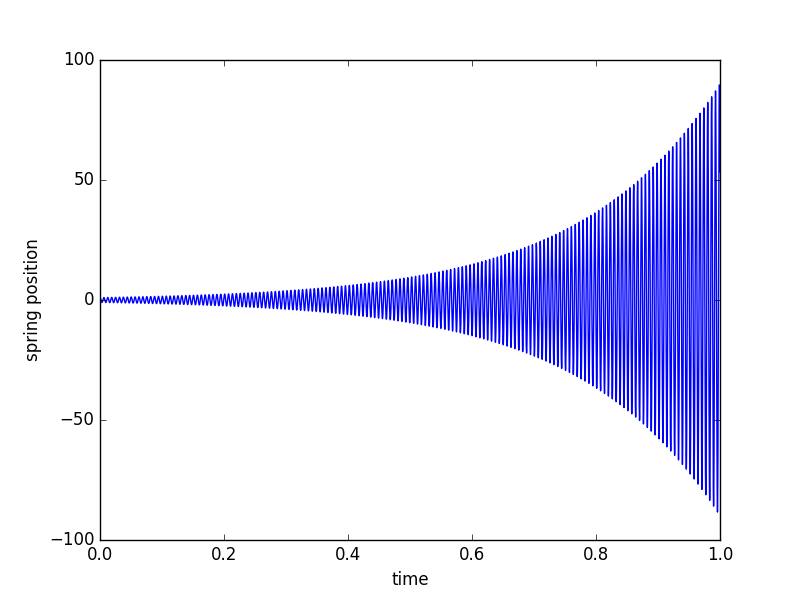}
    \includegraphics[width=.625\linewidth]{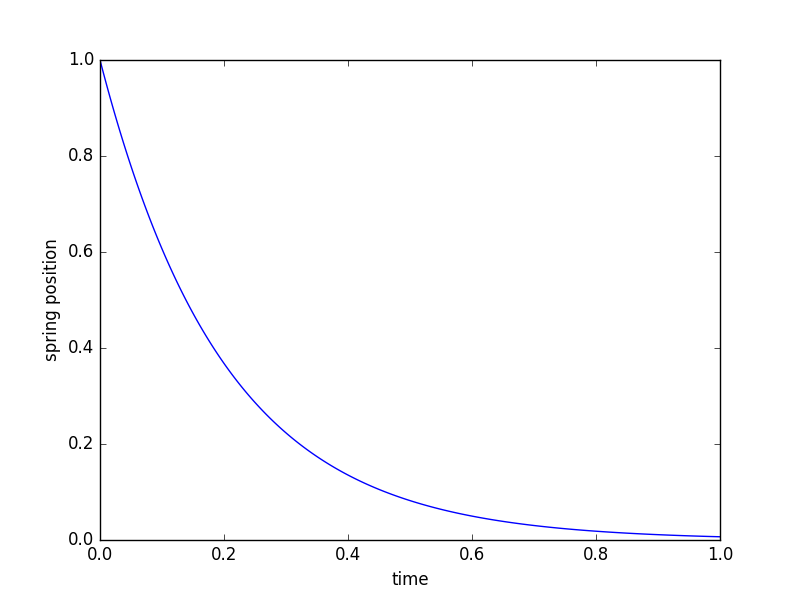}
    \includegraphics[width=.625\linewidth]{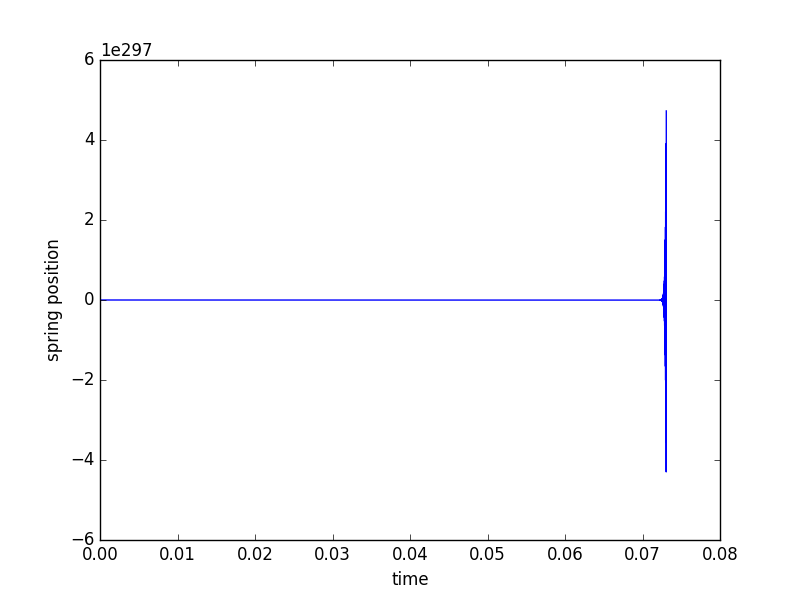}
    \caption{Explicit Euler integration of a spring-mass-damper system with $f = 0, h=10^{-5}$, $m = 1$, $k = 10^6$ for $b = 1$ (top), $b = 2 \times 10^5$ (middle), and $b = 2.1 \times 10^5$ (bottom). Explicit Euler is not stable under the targeted range of conditions.}
    \label{fig:explicit-euler}
\end{figure}

\subsubsection{Semi-implicit Euler}
From~\cite{Hairer:2006} (1.9), the following update rule yields the semi-implicit Euler integrator, which is symplectic:
\begin{align}
x_1 & = x_0 + h v_1 \\
v_1 & = v_0 + \frac{h}{m} (f -bv_1 - kx_0).
\end{align}
Symplectic integrators have been applied to problems extensively for their momentum conserving properties. However, while semi-implicit Euler is stable for any non-negative value of $b$, it is not stable for any non-negative $k$, as Figure~\ref{fig:semi-implicit-euler} shows.

\begin{figure}[htpb]
    \centering
    \includegraphics[width=.495\linewidth]{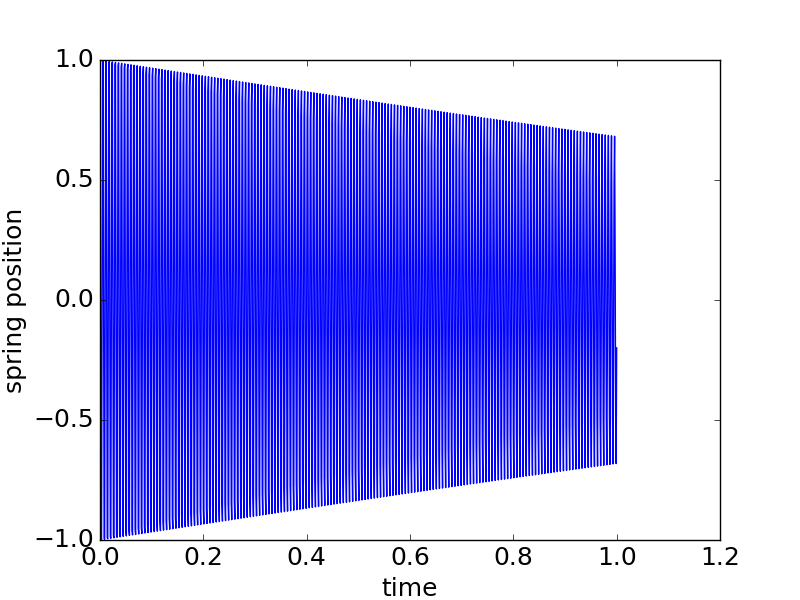}
    \includegraphics[width=.495\linewidth]{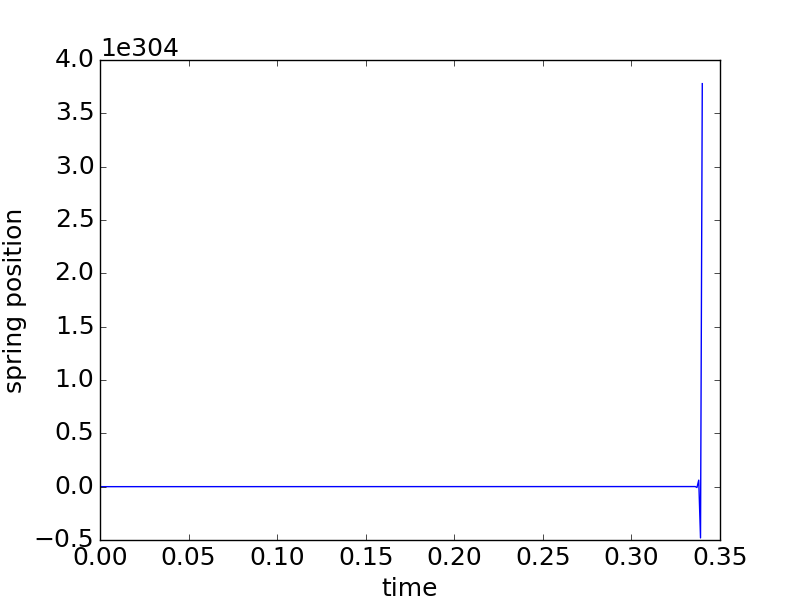}
    \caption{Semi-implicit Euler integration of a spring-mass-damper system with $f = 0, h=10^{-3}$ , $m = 1$, and $b=1$ for $k=10^6$ (left) and $k=10^7$ (right). Semi-implicit Euler is only stable when $k$ is sufficiently small.}
    \label{fig:semi-implicit-euler}
\end{figure}

\subsubsection{Implicit Euler}
The implicit Euler update rule
\begin{align}
x_1 & = x_0 + h v_1 \\
v_1 & = v_0 + \frac{h}{m} (f -bv_1 - kx_1)
\end{align}
is well known to be stable for any step size, and Figure~\ref{fig:implicit} illustrates this behavior. This integration scheme typically introduces significant computational requirements \emph{for arbitrary systems}, however, as determining the next states typically requires solving nonlinear systems of equations. The next section will show how the approach introduced in this article is able to avoid this computationally hard problem.

\begin{figure}
    \centering
    \includegraphics[width=.625\linewidth]{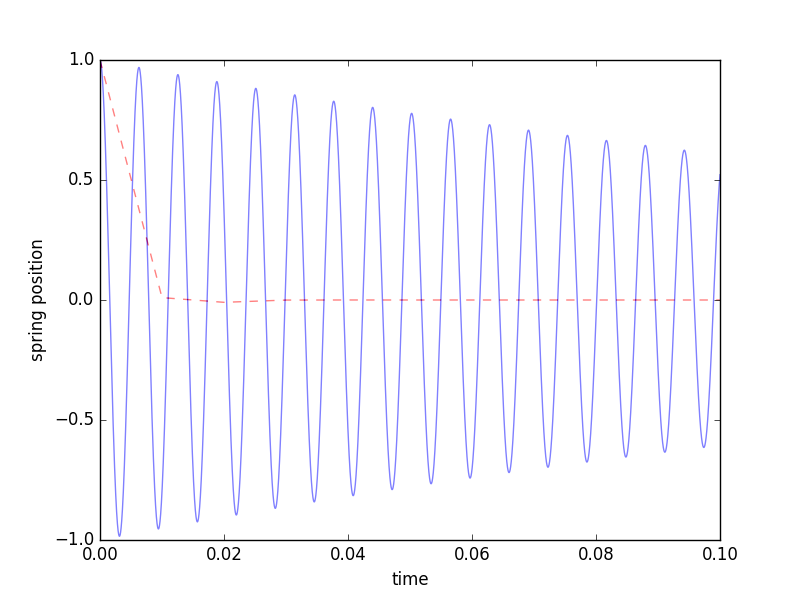}
    \caption{Implicit integration of the mass-spring system with $f = 0, k=10^{6}$, $m = 1$, and $b = 0$ for $h=10^{-5}$ (blue) and $h=10^{-2}$ (red, dashed). The integration is stable for any step size, though it is clear that the solutions are not particularly accurate---the amplitude should not attenuate.}
    \label{fig:implicit}
\end{figure}

\subsection{Conclusion}
For integrating mass-spring-damper systems with $k$ and $b$ in our target range, a stiff integration technique is necessary. But implicit integrators may be avoided in multibody dynamics simulations because the requisite nonlinear equation solutions can prove prohibitively expensive (especially when bodies are contacting). The artificial dissipation introduced by implicit methods (illustrated in Figure~\ref{fig:implicit}) is often a lesser  concern.

\section{Application of the solution of the mass-spring-damper system to multibody systems with holonomic constraints}
\label{section:holonomic-constraints}
The last section showed that a stiff integration (implicit) technique permits stable integration of mass-spring-damper systems with the necessary parameter ranges. This section will 
introduce a new DAE formulation for multibody systems subject to spring-damper forcing models (or, equivalently, subject to kinematic constraints) and will show how to integrate this system (i.e., solve initial value problems for the DAE formulation) by building on the implicit Euler algorithm described in the previous section. 

%It's readily apparent that the mass-spring-damper system has an equilibrium point at $\{ x = 0, \dot{x} = 0 \}$, and introductory dynamical systems courses teach us that that equilibrium is stable as long as $b > 0$. This writeup will continue using the mass-spring-damper concept analogously to how Baumgarte used the same system to stabilize drift from constraint manifolds (the similarities between this work and Baumgarte end there b/c ...).

Consider a multi-body system having $m$ generalized coordinates $q(t)$ which are constrained by $n$ holonomic functions $\phi_i(t, q)$, for $i \in \{ 1,\ldots,n \}$ (the subscript will only be called out when necessary hereafter). The usual way that this constraint is modeled is as ``hard'' in the sense that solutions are sought that satisfy $\phi(t, q) = 0$ to high accuracy (generally somewhat less than full double precision). Without loss of generality, I will ignore the version of the constraint that is an explicit function of time, i.e., $\phi(q) = 0$. Now imagine that the constraint is no longer to be modeled as hard, where its ``violation'' should be zero, but instead acts as a virtual spring and damper that produces a force $\lambda$ that penalizes deviation from rest:
\begin{align}
\lambda = - K\phi(q) - B\dot{\phi}(q, v) - \hat{M}\dot{G}v, \label{eqn:lambda-first-def}
\end{align}
for some choice of non-negative diagonal matrices $B \in \mathbb{R}^{n \times n}$ and $K  \in \mathbb{R}^{n \times n}$, and where the variables $G$ (and, by extension, its time derivative) and $\hat{M}$ will be defined shortly. This viewpoint of a constraint could be useful for simulating a four bar linkage by virtually welding two rigid bodies together. Or, we might wish to model a contact patch between two bodies using springs with stiffnesses $K$ and damping $B$. In fact, this equation is very close to how penalty methods are applied in multi-body simulation, in which initial value problems for the following ordinary differential equation are solved: 
\begin{align}
\dot{q} & = Nv \\
M\dot{v} & = \tr{G}\lambda + f(t, q, v) \\
\lambda & = -K\phi(q) - B\dot{\phi}(q, v) \label{eqn:penalty-method-lambda},
\end{align}
where $v(t)$ are now the system generalized velocities, $M(q)$ is the generalized inertia matrix, $f(t, q, v)$ is every force but constraint forces acting on the multi-body system, $N(q)$ is a left-invertible matrix that maps elements in $v$ to elements in $\dot{q}$, and $G(q) = \frac{\partial \phi}{\partial q}N$. This latter variable comes from taking the time-derivative of $\phi(q): \dot{\phi} = \frac{\partial \phi}{\partial q} \dot{q}$ and using exactly the relationship we just described, $\dot{q} = Nv$. $G$ also yields linear mappings from generalized velocities to constraint velocities and (through transposition) constraint forces to generalized forces.\footnote{This duality arises from mechanical power, which relates velocity and force.} The astute reader will note that the $\hat{M}\dot{G}v$ term from \eqref{eqn:lambda-first-def} is absent in \eqref{eqn:penalty-method-lambda}. I'm showing how forces are applied in the context of the penalty method in \eqref{eqn:lambda-first-def}, and I will shortly provide reasoning for the inclusion of the mysterious $\hat{M}\dot{G}v$ term.

\subsection{Explicitly coupling the multibody and spring-damper systems}
The system of ODEs above does not explicitly consider the dynamics of the constraint (i.e., its trajectory over time), but we will by requiring that it acts as a mass-spring-damper system:
\begin{align}
\hat{M}\ddot{\phi}(q, v, \dot{v})  + B\dot{\phi}(q, v) + K\phi(q) = \hat{f}. \label{eqn:constraint-dynamics}
\end{align}
where $\hat{M}$ is a particular kind of inertia (I call it the ``constraint-space inertia matrix'') and $\hat{f}$ is a force in constraint space. This system has a single fixed point at $\{ \phi(q) = 0, \dot{\phi}(q, v) = 0\}$. I will show that the following DAE, which combines the system of ODEs above with (\ref{eqn:constraint-dynamics}), permits computing efficient solutions to initial value problems for $q(t)$ and $v(t)$:
\begin{align}
\dot{q} & = Nv \label{eqn:dotq} \\
M\dot{v} & = \tr{G}\lambda + f(t, q, v) \label{eqn:dotv} \\
0 & = g(t, q, v, \dot{v}) = \hat{M}\ddot{\phi}(q, v, \dot{v}) + B(q, v)\dot{\phi}(q, v) + K(q)\phi(q) - \hat{f} \label{eqn:g} \\
\lambda & = -K(q)\phi(q) - B(q, v)\dot{\phi}(q, v) - \hat{M}\dot{G}v \label{eqn:lambda-first},
\end{align}
where:
\begin{align}
\hat{M} & \equiv \inv{(G\inv{M}\tr{G})} \label{eqn:hatM} \\
\hat{f} & \equiv \hat{M}G\inv{M}f. \label{eqn:hatf}
\end{align}

Note that $K$ and $B$ have been ``promoted'' from constant terms (in the penalty method) to state-dependent terms, allowing, e.g., stiffnesses to be configuration dependent (which is necessary for correctly modeling linear elasticity) and contact forces to be smooth (by making $B$ a function of $\phi(q)$ as in~\cite{Hunt:1975}, thereby eliminating a possible discontinuity around $\phi = 0$). I will demonstrate how $K$ and $B$ can be computed using proper units in different applications (range-of-motion limits, elastic foundation contact, non-conforming contact) in examples examined throughout this article. 

Why this DAE? It explicitly declares the coupled dynamical systems: the multibody system and the spring-mass-damper system. The spring-mass-damper system is expressed in a form where the parameters ($K$ and $B$) are easily interpretable and its behavior readily analyzed. As just two simple examples, we could identify the undamped oscillation frequency or damping ratio.

Now let us examine the DAE applied to a particle, with mass $m$ and state $(x \in \mathbb{R}, \dot{x}  \in \mathbb{R})$, that is compliantly constrained to the environment using a virtual mass-spring-damper: $\hat{M}\ddot{\phi} + B\dot{\phi} + K\phi = \hat{f}$. Consider how the system of ODEs in (\ref{eqn:dotq})--(\ref{eqn:lambda-first}) is simplified when $m \in \mathbb{R}$ replaces $M$, $1$ (unity) replaces $N$, $x$ replaces $q$, $\dot{x}$ replaces $v$, and $\phi(x) \equiv x$:
\begin{align}
m\ddot{x} =& f + \lambda \label{eqn:first} \\
\hat{f} =& \hat{M}\ddot{\phi} + B\dot{\phi} + K\phi \\
\lambda =& -K\phi -B\dot{\phi}.
\end{align}
Since $\partial \phi/\partial q = G = 1$, we have replaced $\tr{G}\lambda$ in (\ref{eqn:dotv}) with just $\lambda$ and $\hat{M}\dot{G}v$ with zero in \eqref{eqn:lambda-first}. From $\phi(x) \equiv x$ we can obtain $\dot{\phi} = \dot{x}$ and $\ddot{\phi} = \ddot{x}$. It should now be apparent that the particle is equivalent to a spring-mass-damper with stiffness $K$ and damper $B$ that is affixed to the environment when $\hat{M} = m$ and $\hat{f} = f$. 

Similarly, we can attach a rigid body system to the environment using springs and dampers and seek the same effect (with respect to stiffness, damping, and inertia). To bridge the ODE for $\phi$ (Equation~\ref{eqn:g}) with the ODEs for $q$ and $v$ (Equations~\ref{eqn:dotq} and~\ref{eqn:dotv}), we use the observation $\dot{\phi} = \frac{\partial \phi}{\partial q} \dot{q} = \frac{\partial \phi}{\partial q} N v$ to define $G \equiv  \frac{\partial \phi}{\partial q} N$, implying $\dot{\phi} = Gv$ (it can be shown that $G$ is the transpose of the force transmission matrix) and:
\begin{align}
\ddot{\phi} = G\dot{v} + \dot{G}v, \label{eqn:ddot-phi}
\end{align}
as well. Rearranging \eqref{eqn:dotv}:
\begin{align}
\dot{v} = \inv{M}(\tr{G}\lambda + f),
\end{align}
we can substitute $\dot{v}$ into \eqref{eqn:ddot-phi}, yielding:
\begin{align}
\ddot{\phi} = G\inv{M}\tr{G}\lambda + G\inv{M}f + \dot{G}v.
\end{align}
We now substitute the definition of $\lambda$ from \eqref{eqn:lambda-first}:
\begin{align}
\ddot{\phi} = G\inv{M}\tr{G}(-B\dot{\phi} -K\phi - \dot{G}v) + G\inv{M}f + \dot{G}v.
\end{align}
Moving all $\phi$ terms and derivatives to the left hand side:
\begin{align}
\ddot{\phi} + G\inv{M}\tr{G}(B\dot{\phi} + K\phi - \hat{M}\dot{G}v) = G\inv{M}f + \dot{G}v,
\end{align}
and noting that we want the equation to look like \eqref{eqn:g}, we multiply both sides by $\hat{M}$ to arrive at:
\begin{align}
\hat{M}\ddot{\phi} + \hat{M}G\inv{M}\tr{G}(B\dot{\phi} + K\phi + \hat{M}\dot{G}v) = \hat{M}G\inv{M}f + \hat{M}\dot{G}v.
\end{align}
Using the prior definitions $\hat{M} = \inv{(G\inv{M}\tr{G})}$ and $\hat{f} = \hat{M}G\inv{M}f$, we can simplify this to:
\begin{align}
\hat{M}\ddot{\phi} + B\dot{\phi} + K\phi = \hat{f}.
\end{align}
In other words, the rigid body system acts like a mass-spring-system affixed to the environment given $\lambda$ as defined in \eqref{eqn:lambda-first}. Also note that \eqref{eqn:g} is redundant in the DAE: it follows from the definitions of $\lambda$, $\hat{M}$, and $\hat{f}$ as shown above.

I will now show that it is possible to compute the evolution of the constraint dynamics   \emph{independently of $q_1$ and $v_1$} with a first-order solution. This will allow me to show that a solution method can leverage the computationally easy problem of \emph{implicitly} advancing the state of a spring-damper system with the also easy problem of \emph{explicitly} (or semi-explicitly) advancing the state of a multi-rigid body system, all while remaining (at least) first-order accurate. \emph{The upshot is that the method is efficient and stable whatever the magnitudes of $K$ and $B$.}

\subsection{First-order scheme: formulating multibody constraint problems with purely bilateral constraints}
\label{section:formulating-bilateral-constraints}
The challenge with applying an ODE initial value problem solver (i.e., integrator) to \eqref{eqn:dotq}--\eqref{eqn:lambda-first} directly is that the equations are prone to introducing computational stiffness (when $K$ or $B$) is large. The method of choice for computationally stiff ODEs is an implicit integrator, as we observed in Section~\ref{section:spring-mass-damper}, but implicit integrators are usually inefficient when the equations aren't stiff; explicit integrators are the method of choice in that case. The solution to this efficiency problem would seem to be a scheme that can switch between the two approaches (implicit and explicit) as appropriate. Such automatic stiffness detection has rarely proven to be efficient in practice (see~\cite{Hairer:1996}, pp. 21--24).

Instead, I will now describe a particular first-order scheme for the previous constrained multibody system, including the necessary sequence of operations. This particular scheme was constructed to have particular qualities. First, the ODEs for the constraint variables are solved implicitly (for stability). Second, the ODEs for $q$ and $v$, i.e., \eqref{eqn:dotq} and \eqref{eqn:dotv}, are solved semi-explicitly ($v$ is computed explicitly, then that value is used to compute $q$), which avoids solving nonlinear systems of equations and minimizes creep for sticking friction (when friction is introduced into the model, in Section~\ref{section:modeling-contact}). The scheme follows.

Given $q_0$, $v_0$, $N_0$, $M_0$, $G_0$, $f(q_0, v_0)$, $\phi_0$, and $\dot{\phi}_0$, compute:
\begin{align}
q_1 & = q_0 + h N_0 v_1 + O(h^2) \label{eqn:q1} \\
v_1 & = v_0 + h M_0^{-1}(\tr{G_0}\lambda_1 + f_0) + O(h^2)  \label{eqn:v1} \\
\phi_1 & = \phi_0 + h \dot{\phi}_1 + O(h^2) \label{eqn:phi1} \\
\dot{\phi}_1 & = \dot{\phi}_0 + h \ddot{\phi}_0 + O(h^2). \label{eqn:varphi1}
\end{align}
As this section has moved the discussion from the realm of algebra to numerical implementation, further consideration of the $_0$ and $_1$ notation is necessary. Subscript-$0$ denotes a quantity to be evaluated as an input to the problem. On the other hand, subscript-$1$ denotes a variable to be solved for; these variables are what the constraint solver computes. We want to have as few subscript-$1$ variables as possible, and those variables should not be ``buried'' within a function; consider that e.g., \eqref{eqn:q1} specified instead as $q_1 = q_0 + h N_1 v_1 + O(h^2)$ would require solving a nonlinear system of equations for $q_1$ (which we will avoid since many collision detection calls would be necessary in evaluating $N_1$). Finally, variables \emph{not} specified with a subscript will not be evaluated; these variables will be used solely to simplify complicated expressions.

My accuracy analysis will make use of the following two lemmas. 
\begin{lemma}
A function of a time-dependent vector (e.g., $f(x(t))$) that can be well approximated by a Taylor Series around $x(t)$ can be approximated with linear error without consideration of any Jacobian terms. 
\end{lemma}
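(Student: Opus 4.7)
The plan is to show that if we evaluate $f$ at the old state $x(t)$ rather than at $x(t+h)$, the discrepancy is itself $O(h)$, so it gets absorbed into a ``linear error'' bound without ever needing to track the Jacobian of $f$ explicitly.

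First I would expand the argument in time: since $x$ is differentiable, $x(t+h) = x(t) + h\dot{x}(t) + O(h^2)$, so the displacement $\Delta x \equiv x(t+h) - x(t)$ satisfies $\Delta x = O(h)$. Next I would invoke the hypothesized Taylor expansion of $f$ about $x(t)$,
\begin{align}
f(x(t+h)) = f(x(t)) + \frac{\partial f}{\partial x}\bigl(x(t)\bigr)\,\Delta x + O(\|\Delta x\|^2).
\end{align}
Substituting the bound $\Delta x = O(h)$ into both the first-order term and the remainder yields
\begin{align}
f(x(t+h)) = f(x(t)) + \frac{\partial f}{\partial x}\bigl(x(t)\bigr)\,h\,\dot{x}(t) + O(h^2) = f(x(t)) + O(h),
\end{align}
which is exactly the claim: the Jacobian contribution is already first-order in $h$, so dropping it costs only an additional linear-order error, and no Jacobian needs to be computed to preserve linear-error accuracy.

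The only real subtlety is making precise the phrase ``well approximated by a Taylor Series,'' which I would interpret as: $f$ is at least $C^1$ on a neighborhood of $x(t)$ and the first-order remainder satisfies the standard little-$o$/big-$O$ bound $\|f(x(t)+\Delta x) - f(x(t)) - (\partial f/\partial x)\Delta x\| = O(\|\Delta x\|^2)$. Given that assumption, the argument above is essentially a one-line chain of substitutions, so I do not expect a real obstacle; the main care is in bookkeeping the constants hidden inside the $O(\cdot)$ terms (they depend on bounds on $\dot{x}$ and on the local Lipschitz constant of $\partial f/\partial x$, both of which are implicit in the hypothesis).
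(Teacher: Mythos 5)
Your argument is correct and is essentially the paper's own proof: both expand $f$ to first order about $x(t)$, observe that the Jacobian term $\frac{\partial f}{\partial x}\,h\dot{x}$ is itself $O(h)$, and conclude $f(x(t+h)) = f(x(t)) + O(h)$. Your version merely adds the explicit intermediate step $\Delta x = O(h)$ and a precise reading of ``well approximated by a Taylor Series,'' which is fine but not a different route.
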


\begin{proof}
The proof is evident using only the first two terms of the Taylor Series expansion:
\begin{align}
f(x(t+h)) = f(x(t)) + h \frac{\partial f}{\partial x} \dot{x} + O(h^2). \label{eqn:taylor}
\end{align}
And therefore
\begin{align}
f(x(t+h)) = f(x(t)) + O(h).
\end{align}
\end{proof}

\begin{lemma}
Any time-dependent term in an ODE evaluated at time $t_0$ that is approximated by its value at $t_0+h$ introduces $O(h^2)$ truncation error into the solution of the initial value problem at $t_0+h$.   
\end{lemma}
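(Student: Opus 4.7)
\medskip

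\noindent\textbf{Proof proposal.} The plan is to reduce the statement to a one-step truncation analysis of forward Euler with a perturbed right-hand side, and then invoke Lemma~1 to bound the perturbation.

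First I would set up the setting precisely. Consider an initial value problem $\dot{y}(t) = F(t, y(t), z(t))$ with $y(t_0)$ given, where $z(t)$ is the ``time-dependent term'' referred to in the statement (e.g., a coefficient or inhomogeneity that appears in the ODE). The exact one-step update, via Taylor expansion around $t_0$, is
\begin{align}
y(t_0 + h) = y(t_0) + h F(t_0, y(t_0), z(t_0)) + O(h^2),
\end{align}
assuming $F$ and $z$ are sufficiently smooth so that $\ddot{y}$ is bounded on a neighborhood of $t_0$. The approximation under consideration replaces $z(t_0)$ with $z(t_0 + h)$ inside $F$, producing the one-step update
\begin{align}
\tilde{y}(t_0 + h) = y(t_0) + h F(t_0, y(t_0), z(t_0 + h)).
\end{align}

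Next I would quantify the perturbation. By Lemma~1 applied to $z$, we have $z(t_0 + h) = z(t_0) + O(h)$. Under the standing smoothness assumption on $F$ (Lipschitz in its third slot on a neighborhood of the trajectory suffices), this yields
\begin{align}
F(t_0, y(t_0), z(t_0 + h)) = F(t_0, y(t_0), z(t_0)) + O(h).
\end{align}
Subtracting the two update formulas gives
\begin{align}
\tilde{y}(t_0 + h) - y(t_0 + h) = h \cdot O(h) + O(h^2) = O(h^2),
\end{align}
where the crucial observation is that the $O(h)$ error in the right-hand side is multiplied by the step size $h$ in the Euler update, producing the claimed $O(h^2)$ local truncation error at $t_0 + h$.

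The step I expect to be the main (modest) obstacle is formalizing ``time-dependent term'' so that Lemma~1 applies cleanly: the lemma gives an $O(h)$ perturbation of $z$ itself, but to propagate this to an $O(h)$ perturbation of the right-hand side $F$ I need some regularity of $F$ with respect to that slot. I would handle this by adopting the same Taylor-expandability hypothesis used implicitly throughout the section (so that $F$ is smooth in its arguments on the region of interest), which is consistent with Lemma~1's statement and sufficient to conclude that the composed perturbation is still $O(h)$. Everything else is a direct Taylor-remainder calculation.
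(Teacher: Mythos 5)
Your proposal is correct and follows essentially the same route as the paper's proof: Taylor-expand the one-step solution, use Lemma~1 to bound the perturbation of the term whose evaluation time is shifted by $O(h)$, and observe that this perturbation enters the update multiplied by $h$, so the resulting $h\cdot O(h)$ contribution is subsumed by the existing $O(h^2)$ truncation term. Your only addition is making the regularity of the right-hand side in the perturbed slot explicit (a Lipschitz/smoothness hypothesis), which the paper leaves implicit by applying Lemma~1 directly to the evaluation of $\dot{f}$.
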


\begin{proof}
Consider the first two terms in the Taylor Series, $f(x(t_0+h)) = f(x(t_0)) + h \dot{f}(x(t_0), \dot{x}(t_0)) + O(h^2)$; this is equivalent to ~\eqref{eqn:taylor} because $\dot{f} = \frac{\partial f}{\partial x} \dot{x}$.  Evaluating $x(t_0)$, to pick one term arbitrarily, at $t_0+h$ instead of $t_0$ introduces an error of $O(h)$ into $\dot{f}$'s evaluation, from Lemma~1: 
\begin{align}
f(x(t_0+h)) = & f(x(t))|_{x(t_0)} + h (\dot{f}(x(t), \dot{x}(t))|_{x(t_0+h), \dot{x}(t_0)} + O(h)) + \ldots \\
& \quad O(h^2). \nonumber
\end{align}
The $h O(h) = O(h^2)$ term is subsumed by the existing $O(h^2)$ term. It should be clear that any combination of substitutions of $x(t)$ and $\dot{x}(t)$ by their values at $x(t+h)$ and $\dot{x}(t+h)$, respectively, results in identical truncation error.
\end{proof}

\begin{thm}
Integrating $\phi(q)$ and $\dot{\phi}(q, v)$ independently from $q$ and $v$, all from initial time $t_0$, yields solutions at $t_1$ for $\phi(q_1)$, $\dot{\phi}(q_1, v_1)$, that exhibit quadratic truncation error in $h = t_1 - t_0$.
\end{thm}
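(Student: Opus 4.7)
The plan is to compare the scheme's updates \eqref{eqn:phi1} and \eqref{eqn:varphi1} term-by-term with one-step Taylor expansions of the exact quantities $\phi(q(t_1))$ and $\dot{\phi}(q(t_1), v(t_1))$, then invoke Lemmas~1 and~2 to absorb all $t_0 \leftrightarrow t_1$ substitutions into the $O(h^2)$ remainder. No cleverness is required; the work is essentially bookkeeping.

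First I would Taylor-expand about $t_0$:
\begin{align}
\phi(q(t_1)) &= \phi_0 + h\,\dot{\phi}_0 + O(h^2), \\
\dot{\phi}(q(t_1), v(t_1)) &= \dot{\phi}_0 + h\,\ddot{\phi}_0 + O(h^2).
\end{align}
The $\dot{\phi}_1$ case is then immediate: \eqref{eqn:varphi1} is literally this expansion, so $\dot{\phi}_1 - \dot{\phi}(q(t_1), v(t_1)) = O(h^2)$. For $\phi_1$, the scheme \eqref{eqn:phi1} uses $\dot{\phi}_1$ rather than $\dot{\phi}_0$ in the $O(h)$ coefficient; Lemma~1 gives $\dot{\phi}_1 = \dot{\phi}_0 + O(h)$, so $h\,\dot{\phi}_1 = h\,\dot{\phi}_0 + O(h^2)$. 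Substituting into \eqref{eqn:phi1} yields $\phi_1 = \phi_0 + h\,\dot{\phi}_0 + O(h^2) = \phi(q(t_1)) + O(h^2)$, and Lemma~2 is precisely the formal statement that this swap costs only $O(h^2)$.

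The only genuine subtlety is the word ``independently'' in the hypothesis combined with the fact that the theorem phrases the error against $\phi(q_1)$ and $\dot{\phi}(q_1, v_1)$ rather than against the true trajectory. The scheme produces $\phi_1$ and $\dot{\phi}_1$ from $\phi_0, \dot{\phi}_0, \ddot{\phi}_0$ alone, with no dependence on the separately-computed $q_1, v_1$, so the argument above actually bounds the error against $\phi(q(t_1))$ and $\dot{\phi}(q(t_1), v(t_1))$. To reconcile, I would observe that \eqref{eqn:q1} and \eqref{eqn:v1} give $q_1 - q(t_1) = O(h^2)$ and $v_1 - v(t_1) = O(h^2)$ as local truncation errors, so smoothness of $\phi$ and $\dot{\phi}$ plus one more Taylor expansion yields $\phi(q_1) = \phi(q(t_1)) + O(h^2)$ and $\dot{\phi}(q_1, v_1) = \dot{\phi}(q(t_1), v(t_1)) + O(h^2)$. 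I expect this reconciliation to be the main obstacle to a clean write-up, although it is conceptually routine: every substitution contributes at most $h$ times an $O(h)$ error, which the existing $O(h^2)$ remainder absorbs without further adjustment.
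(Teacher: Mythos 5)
Your Taylor-expansion bookkeeping is fine as far as it goes, but it proves a weaker statement than the one the paper's proof establishes, because it never engages with how $\dot{\phi}_1$ is actually computed. In the scheme under analysis, $\ddot{\phi}_0$ is not available data: by the bridging relation \eqref{eqn:ddotphi0} it equals $G_0\dot{v}_0 + \dot{G}_0 v_0$, and $\dot{v}_0$ depends on the constraint force, which the method evaluates \emph{implicitly}, i.e., with $\lambda_1$ in place of $\lambda_0$ (this substitution, justified by Lemma~1, is the entire point of the construction --- stability for stiff $K$ and $B$). Moreover $\lambda_1$ is itself defined through the unknowns $\phi_1$ and $\dot{\phi}_1$ via \eqref{eqn:lambda-bilateral}. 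So the update for $\dot{\phi}_1$ is not ``literally the Taylor expansion'' with a known right-hand side; it is one block of a coupled $3\times 3$ block system in $(\phi_1, \dot{\phi}_1, \lambda_1)$. The substance of the paper's proof is (i) verifying, via Lemmas~1 and~2, that each mixed-time substitution ($\lambda_0 \to \lambda_1$, $\dot{\phi}_0 \to \dot{\phi}_1$ in the $\phi$-update, $\phi_1 \to \phi_0 + h\dot{\phi}_1$) contributes only $O(h^2)$ after multiplication by $h$, and (ii) showing that the coupled system collapses --- using the cancellation $G_0\inv{M_0}\tr{G_0}\hat{M}_0 = I$, which eliminates the $\dot{G}_0 v_0$ terms --- to the explicitly solvable update \eqref{eqn:dotphi1-bilateral}--\eqref{eqn:phi1-bilateral}. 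That reduction is what makes integrating $\phi$ and $\dot{\phi}$ ``independently of $q_1, v_1$'' realizable at all. By treating $\ddot{\phi}_0$ as given, your argument establishes only the standard local error of an explicit/semi-explicit Euler step; it shows neither that the implicit scheme the solver actually uses retains quadratic truncation error nor that it can be computed without $q_1$ and $v_1$, which is the claim of the theorem.

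Your closing reconciliation ($q_1 = q(t_1) + O(h^2)$, hence $\phi(q_1) = \phi(q(t_1)) + O(h^2)$, assuming smoothness of $\phi$) is a reasonable observation and does not conflict with the paper, but it is peripheral: the paper simply tracks truncation error through each substitution and concludes that the computed $\phi_1, \dot{\phi}_1$ approximate $\phi(q_1), \dot{\phi}(q_1, v_1)$ to $O(h^2)$ without accurate knowledge of $q_1, v_1$. The genuinely missing piece in your write-up is the treatment of $\lambda_1$ and the elimination yielding the $Y$-based linear solve; without it, there is no proof that the advertised independent, implicit integration has the stated accuracy.
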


\begin{proof}
The proof will start from the explicit Euler algorithm applied to $\phi$ and $\dot{\phi}$. Explicit Euler is known to be locally first-order accurate (equivalent to saying that the truncation error is quadratic). Quadratic truncation error will be maintained through every step. Explicit Euler applied to $\phi$ and $\dot{\phi}$ yields the equations:
\begin{align}
\phi_1 & = \phi_0 + h\dot{\phi}_0 + O(h^2) \label{eqn:explicit-euler-phi} \\
\dot{\phi}_1 & = \dot{\phi}_0 + h\ddot{\phi}_0 + O(h^2). \label{eqn:explicit-euler-dotphi}
\end{align}
This approach can be transformed into a semi-explicit Euler method by using the identity $\dot{\phi}_0 = \dot{\phi}_1 + O(h)$ and applying Lemma~2 to~\eqref{eqn:explicit-euler-phi}:
\begin{align}
\phi_1 & = \phi_0 + h\dot{\phi}_1 + O(h^2) \label{eqn:semi-explicit-euler-phi}
\end{align}
Thus,~\eqref{eqn:semi-explicit-euler-phi} and~\eqref{eqn:explicit-euler-dotphi} represent the foundation of the integration scheme. We now demarcate evaluation times for the ``bridging equation'', \eqref{eqn:ddot-phi}:
\begin{align}
\ddot{\phi}_0 = G_0\dot{v}_0 + \dot{G}_0v_0 \label{eqn:ddotphi0},
\end{align}
and for~\eqref{eqn:dotv} as well, rearranging slightly first:
\begin{align}
\dot{v}_0 = \inv{M_0}(\tr{G_0}\lambda_0 + f_0). \label{eqn:Mvdot0}
\end{align}
Applying Lemma~1 to this equation allows us to establish the relationship:
\begin{align}
\dot{v}_0 = \inv{M}_0(\tr{G_0}\lambda_1 + f_0) + O(h). \label{eqn:dotv0_star}
\end{align}
Then substituting~\eqref{eqn:dotv0_star} into~\eqref{eqn:ddotphi0} yields:
\begin{align}
\ddot{\phi}_0 & = G_0\underbrace{\big(\inv{M_0}(\tr{G_0}\lambda_1 + f_0) + O(h)\big)}_{\dot{v}_0} + \dot{G}_0v_0.    
\end{align}
And substituting \emph{that} into~\eqref{eqn:explicit-euler-dotphi} gives:
\begin{align}
\dot{\phi}_1 & = \dot{\phi}_0 + h(\underbrace{G_0(\inv{M_0}(\tr{G_0}\lambda_1 + f_0)) + \dot{G}_0v_0)}_{\ddot{\phi_0}} + O(h^2), \label{eqn:varphi1-prime}
\end{align}
from Lemma~2.

Combining~(\ref{eqn:phi1}) and~(\ref{eqn:varphi1-prime}) and an additional equation for $\lambda_1$ (Equation~\ref{eqn:lambda-first} with specified times for $K$, $B$, $\phi$ and $\dot{\phi}$) yields three blocks of equations in three blocks of unknowns ($\phi_1$, $\dot{\phi}_1$, and $\lambda_1$):
\begin{align}
\phi_1 & = \phi_0 + h \dot{\phi}_1 + O(h^2) \label{eqn:3x3-first} \\
\dot{\phi}_1 & = \dot{\phi}_0 + h\big(G_0(\inv{M_0}(\tr{G_0}\lambda_1 + f_0)) + \dot{G}_0v_0\big) + O(h^2) \label{eqn:3x3-middle} \\
\lambda_1 & = -K_0\phi_1 - B_0\dot{\phi}_1 - \inv{(G_0\inv{M}_0\tr{G}_0)}\dot{G}_0v_0 + O(h). \label{eqn:lambda-bilateral}
\end{align}
Note the provenance of the $O(h)$ term for the last equation comes from mixing evaluation times, in accordance with Lemma~1. We can solve the 3 block $\times$ 3 block linear system above by first substituting \eqref{eqn:lambda-bilateral} into \eqref{eqn:3x3-middle}, yielding:
\begin{align}
\dot{\phi}_1 & = \dot{\phi}_0 +  \ldots \\ 
& \quad h\big(G_0(\inv{M_0}(\tr{G_0}\underbrace{(-K_0\phi_1 - B_0\dot{\phi}_1 - \inv{(G_0\inv{M}_0\tr{G}_0)}\dot{G}_0v_0 + O(h))}_{\lambda_1} + \ldots \nonumber \\
& \qquad f_0)) + \dot{G}_0v_0\big) + O(h^2) \nonumber 
\end{align}
$G_0\inv{M}_0\tr{G}_0\inv{(G_0\inv{M}_0\tr{G}_0)}$ yields the identity matrix, leaving us with $-h\dot{G}_0v_0$ and $h\dot{G}_0v_0$ terms, which cancel. Also, the $hO(h)$ term is subsumed by the $O(h^2)$ term, allowing us to remove it. Combining both simplifications, we arrive at:
\begin{align}
\dot{\phi}_1 & = \dot{\phi}_0 + h\big(G_0(\inv{M_0}(\tr{G_0}(-K_0\phi_1 - B_0\dot{\phi}_1 + f_0))\big) + O(h^2).
\end{align}
We next substitute \eqref{eqn:3x3-first} into this equation, yielding:
\begin{align}
\dot{\phi}_1 & = \dot{\phi}_0 + h\big(G_0(\inv{M_0}(\tr{G_0}(-K_0\underbrace{(\phi_0 + h\dot{\phi}_1 + O(h^2))}_{\phi_1} - B_0\dot{\phi}_1) + f_0)) + O(h^2).
\end{align}
Now the $hO(h^2)$ term is subsumed by the $O(h^2)$ term:
\begin{align}
\dot{\phi}_1 & = \dot{\phi}_0 + h\big(G_0(\inv{M_0}(\tr{G_0}(-K_0(\phi_0 + h\dot{\phi}_1) - B_0\dot{\phi}_1) + f_0)) + \big) + O(h^2).
\end{align}
When we move all $\dot{\phi}_1$ terms to the left hand side, we arrive at:
\begin{align}
& (I + h\big(G_0(\inv{M_0}(\tr{G_0}(hK_0 + B_0)))\big)\dot{\phi}_1 = \dot{\phi}_0 + \ldots\\ 
& \qquad h\big(G_0(\inv{M_0}(\tr{G_0}(-K_0\phi_0 + f_0))\big) + O(h^2).  \nonumber 
\end{align}
Collecting terms and simplifying using the following definitions:
\begin{align}
\hat{M}_0 & \equiv \inv{(G_0\inv{M_0}\tr{G_0})} \\
Y & \equiv \inv{(I + \inv{\hat{M}_0}(h^2K_0 + hB_0))},
\end{align}
we obtain:
\begin{tcolorbox}
\begin{align}
\dot{\phi}_1 & = Y(\dot{\phi}_0 - h\inv{\hat{M}_0}K_0\phi_0 + hG_0\inv{M_0}f_0) + O(h^2) \label{eqn:dotphi1-bilateral} \\ 
\phi_1 & = \phi_0 + h\dot{\phi}_1 + O(h^2). \label{eqn:phi1-bilateral}
\end{align}
\end{tcolorbox}
The computation of $\lambda_1$ (via Equation~\ref{eqn:lambda-bilateral}) permits evaluating $v_1$ (via Equation~\ref{eqn:v1}), which in turn permits evaluating $q_1$ (via Equation~\ref{eqn:q1}). No step of the scheme has given up beyond quadratic truncation error for the state variables ($q$, $v$, $\phi$, and $\dot{\phi}$). Therefore, \eqref{eqn:dotphi1-bilateral} and~\eqref{eqn:phi1-bilateral} yield first-order accurate solutions $\phi(q_1)$ and $\dot{\phi}(q_1, v_1)$ without accurate knowledge of the value of $q_1$ and $v_1$. $\blacksquare$
\end{proof}

\noindent\makebox[\linewidth]{\rule{\linewidth}{0.4pt}} \vspace{2.5mm}

\textbf{For the remainder of this article, I will continue to assume that $f$ and $B$ are evaluated using $q_0, v_0$ and $N$, $G$, $M$ and $K$ are evaluated using $q_0$, allowing us to drop the subscripts used previously. Thus $f \equiv f_0$, $N \equiv N_0$, $G \equiv G_0$, $\dot{G} \equiv \dot{G}_0$, $K \equiv K_0$, $B \equiv B_0$, $M \equiv M_0$, $\hat{f} \equiv \hat{f}_0$ and $\hat{M} \equiv \hat{M}_0$.}
\\

\vspace{2.5mm}
\noindent\makebox[\linewidth]{\rule{\linewidth}{0.4pt}}

\subsection{Implications}
Conceptually, the first-order scheme described in the previous subsection provides an algorithm for integrating two coupled sets of dynamical systems---the multibody system and the constraints---independently and with first-order accuracy.

This result is profound: it means that stiff, or even rigid, holonomic constraint forces can be computed efficiently using only the solution to a linear system. No nonlinear system of equations needs to be solved, which also means that step sizes do not need to be limited to ensure convergence of a Newton-Raphson scheme (cf.~\cite{Hairer:1996}, pp. 123--126). The resulting constraint forces can then be used with at least any first-order integration scheme; extension to higher order schemes might also be possible. 

Also note that the matrix $Y$ is always invertible. It is the sum of a positive definite matrix ($I$, the identity matrix) and the product of a positive semi-definite matrix and a linear combination of positive semi-definite, diagonal matrices ($K$ and $B$). Given that $\tr{u}Iu > 0$ and $\tr{u}(h(hK + B))u \geq 0$, $Y$ must be positive definite as well, because
\begin{align}
\tr{u}(I + h(hK + B))u > 0
\end{align}
for any vector $u$ of nonzero real values. Positive definite matrices are always invertible (in fact, their inverses are positive definite as well). The implication of the invertibility of $Y$ is that constraint forces become distributed among the constraints: indeterminacy is resolved organically. The approaches for unilateral and contact constraints will share this nice property also. 

Finally we address the question of how such stability is attained for so little computational cost. The key is that the forces due to the constraints do not cause computational stiffness; it is still possible to introduce computational forces through other forces (e.g., large PD control terms) applied to the multibody system. Instead, the constraint-based approach removes any computational stiffness from the constraints.

\subsection{Example: pendulum}
The constraint-based approach was used to simulate a pendulum in 2D using an absolute coordinate formulation. The configuration of the bob was represented using three-coordinates: center-of-mass location (two coordinates, $x$ and $y$) and bob orientation (one coordinate, $\theta$). The velocity variables were simply $\dot{x}, \dot{y}$, and $\dot{\theta}$. The holonomic constraint was:
\begin{align}
\phi = \begin{bmatrix}
x \\
y
\end{bmatrix} + 
R(\theta)u,
\end{align}
where $R(\theta)$ is an orientation matrix that transforms vectors in the bob's body frame to the world frame and $u$ is a vector from the bob to the nominal joint location. The pendulum was started in the fully horizontal position with zero velocity and was simulated for one second of virtual time. At every time step, (\ref{eqn:dotphi1-bilateral}) and~(\ref{eqn:phi1-bilateral}) were used to determine the next $\dot{\phi}$ and $\phi$, respectively, which were used in turn to compute, via~(\ref{eqn:lambda-bilateral}), $\lambda_1$. $\lambda_1$ was used in turn to compute the next $q$ and $v$ (via the symplectic scheme represented by Equations~\ref{eqn:v1} and \ref{eqn:q1}).

Figure~\ref{fig:constraint-deviation-pendulum} shows the constraint deviation under large stiffnesses for both a large ($h=0.1$) and small ($h=10^{-4}$) step sizes; note that these ``errors'' scale quadratically with $h$, consistent with the $O(h^2)$ error proved in Section~\ref{section:formulating-bilateral-constraints}. The symplectic scheme caused the energy of the pendulum to be maintained to precisely zero. Observers who have experience with implicit integration techniques, which numerically effect some artificial dissipation, might be wondering how the scheme conserves energy for large step sizes. The explanation is that momentum \emph{orthogonal to the constraint} is conserved. An example in the next section will show that system energy indeed dissipates whenever momentum is not orthogonal to the constraints.  

\begin{figure}[htpb]
\includegraphics[width=.495\linewidth]{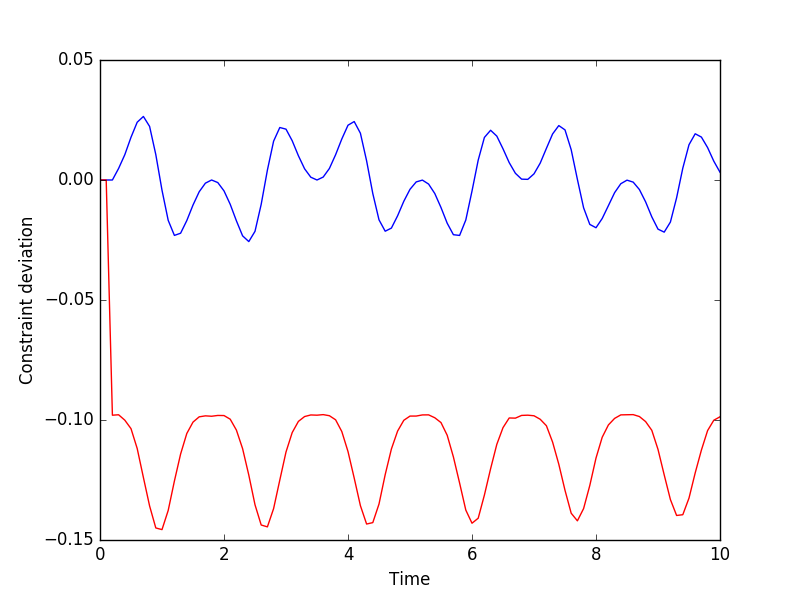}
\includegraphics[width=.495\linewidth]{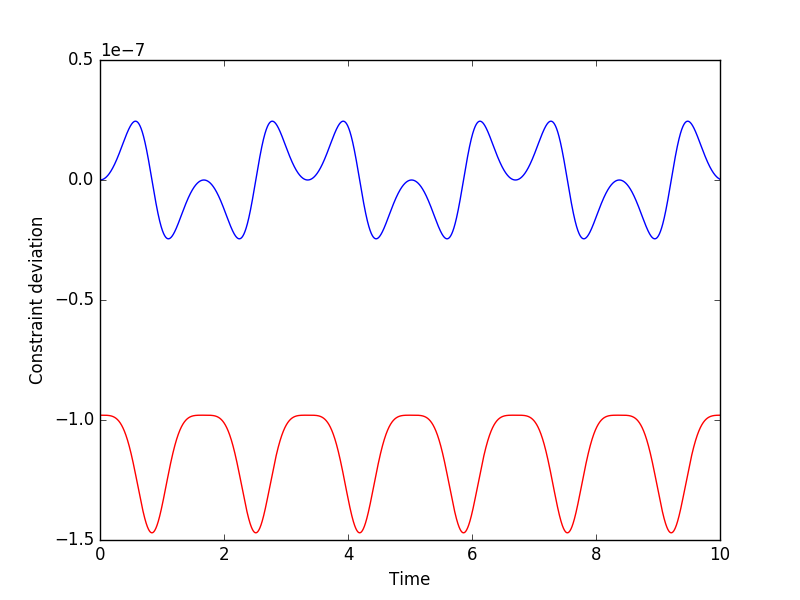}
\caption{Deviation of the pendulum from its nominal joint position under the parameters $k = 10^{15}$ and $b = 1$ for $h = 0.1$ (left) and $h = 10^{-4}$ (right) . While the stiffness is large, the system does not become unstable. Note the different scales in the two plots. \label{fig:constraint-deviation-pendulum}}
\end{figure}

\section{Unilaterally constrained spring-damper model}
\label{section:unilateral-msd}
While bilateral constraints are useful to introduce the methodology underlying my approach, the hard technical challenges usually emerge in multibody dynamics only after unilateral constraints---and the complementarity problems used to model the dynamics---are introduced. And so I will now demonstrate how we can apply my approach to multibody systems subject to unilateral constraints. Consider the following system composed of a  spring-damper affixed to the ``world'' that can compress arbitrarily but not extend past its resting length, and a particle that can move only along one dimension. When the particle enters the region $x \leq 0$, the spring is considered to be compressed. The spring can only push (i.e., not pull) against the particle (i.e., only compressive forces are allowed). When the spring is at its resting length (implied by $x > 0$), it can impart no force on the particle. Figure~\ref{fig:unilateral-msd} depicts these conditions.

\begin{figure}
    \centering
    \includegraphics[width=.33\linewidth]{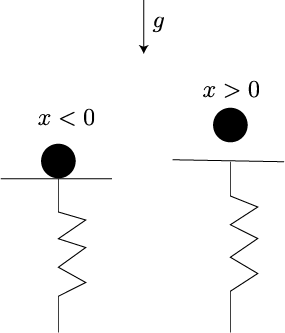}
    \caption{An illustration of the unilateral spring system modeling contact between a particle with height $x$ and ``the world''; we are interested in how the spring-damper system imparts forces on the particle. We assume that the spring can only compress; it cannot extend past its resting length. The surface of the spring-damper system can only impart forces on the particle when the spring is compressed (left); it is assumed both that the spring is at its resting length and that the rate of change of the resting length is zero when $x > 0$ (right). The figure also shows the direction of gravity.}
    \label{fig:unilateral-msd}
\end{figure}

\subsection{Holonomic unilateral constraint formulation for multibody systems}
Consider the multibody system with state $(q, v)$, vector function of $n$ holonomic, unilateral constraint functions $\phi(q) \to \mathbb{R}^n$, generalized inertia matrix $M$, Jacobian matrix G = $\frac{\partial \phi}{\partial q} N$, and external force vector $f(q, v)$.  Stiffness and damping scalars $k$ and $b$ again become non-negative diagonal matrices $K \in \mathbb{R}^{n \times n}$ and $B  \in \mathbb{R}^{n \times n}$.

I use a piecewise DAE model\footnote{A piecewise DAE is a sequence in time of (typically related) differential algebraic equations, where the initial conditions for the $i^{\textrm{th}}$ DAE are drawn from the solution to the $(i-1)^{\textrm{th}}$ DAE, and the initial conditions for the first DAE are specified by the user.}, where the ``pieces'' correspond to times where a constraint becomes active (i.e., where $\phi$ is non-positive \emph{and} $\lambda$ transitions from zero to positive) or inactive (i.e., where $\lambda$ transitions from positive to zero). No better than first-order accuracy can be obtained without an event-detection mechanism that isolates times in which constraints transition from active to inactive and vice versa~\cite{Anitescu:2004a}.

I will reformulate this DAE as a \emph{differential programming problem} (DPP), which combines ordinary differential equations with mathematical programming variables, where solution to the latter is necessary to solve initial value problems over the former. The mathematical programming problem in my particular formulation is a convex quadratic program for which the solution (to $\lambda$) yields the maximum value of zero and $-K\phi(q) - B\dot{\phi}(q, v) - \hat{M}\dot{G}v$, obviating an otherwise necessary discontinuous nonlinear equality constraint (namely $\lambda = \max{(0, -K\phi(q) - B\dot{\phi}(q, v) - \hat{M}\dot{G}v)}$ for unknown $\lambda, \phi$, and $\dot{\phi}$). 

\subsection{DAE model of a unilaterally constrained system}
The version of the DAE from~(\ref{eqn:dotq})--(\ref{eqn:lambda-first}) modified to model a purely compressive, compliant unilateral constraint is given below: 
\begin{align}
\dot{q} = &\ Nv \\
M\dot{v} = &\ \tr{G}\lambda + f \label{eqn:throwaway2} \\
\hat{f} = &\ \hat{M}\ddot{\phi} + B\dot{\phi} + K\phi \label{eqn:unilateral-constraint-dynamics} \\
\lambda = & \max{(0, -K\phi - B\dot{\phi} - \hat{M}\dot{G}v}). \label{eqn:nonlineq}
\end{align}
The nonlinear equality constraint in~(\ref{eqn:nonlineq}) makes this DAE very challenging to solve. Nonlinear systems of equations are challenging to solve in general, as feasible points can be hard to find and success is usually dependent upon having a good starting point. And~(\ref{eqn:nonlineq}) is not differentiable at $\lambda = 0$, stymieing typical derivative-based search strategies. However, the DAE above is equivalent to the following DPP that computes $\lambda$ via a convex quadratic program:
\begin{align}
\minimize_\lambda & \frac{1}{2}\tr{\lambda}\lambda \\
\dot{q} = &\ Nv \nonumber \\
M\dot{v} = &\ \tr{G}\lambda + f \nonumber \\
\hat{f} = &\ \hat{M}\ddot{\phi} + B\dot{\phi} + K\phi \nonumber \\
\lambda \geq &\ -K\phi - B\dot{\phi} - \hat{M}\dot{G}v \\
\lambda \geq &\ 0.
\end{align}
where recall that we have defined:
\begin{align*}
\hat{M} \equiv &\ \inv{(G\inv{M}\tr{G})} \\
\hat{f} \equiv &\ \hat{M}G\inv{M}f
\end{align*}
in \eqref{eqn:hatM} and \eqref{eqn:hatf}, respectively. Computing the minimum value of a number that is at least as large as two other numbers is simply a maximization operation. In contrast to the previous DAE that required solving a nonlinear system of equations, the quadratic program above is trivial to solve.

This particular DPP will also prove useful when I introduce contact constraints with friction. For now, we analyze the convexity and feasibility of this problem when it is used to solve the time-discretized version of this problem for $\{ \lambda, \phi_1, \dot{\phi}_1 \}$. 

\subsection{The time-discretized DPP for a unilaterally constrained system}
\label{section:formulating-generic-unilateral-constraints}
Discretizing in time, removing \eqref{eqn:unilateral-constraint-dynamics}---these dynamics follow from the definition of $\lambda$, so \eqref{eqn:unilateral-constraint-dynamics} is redundant---and ignoring $v_1$ and $q_1$ momentarily yields:
\begin{align}
\minimize_{\lambda_1, \phi_1, \dot{\phi}_1}\ & \frac{1}{2}\tr{\lambda_1}{\lambda_1} \\
\phi_1 & = \phi_0 + h \dot{\phi}_1 + O(h^2) \\
\dot{\phi}_1 & = \dot{\phi}_0 + h \underbrace{(G\dot{v}_0 + \dot{G}v_0)}_{\ddot{\phi_0}} + O(h^2) \label{eqn:throwaway1} \\
\lambda_1 & \geq - K\phi_1 - B\dot{\phi}_1 - \hat{M}_0\dot{G}v + O(h) \label{eqn:second-to-last-inequality} \\
\lambda_1 & \geq 0,
\end{align}
where the $O(h)$ term in \eqref{eqn:second-to-last-inequality} comes from Lemma~1, and we use \eqref{eqn:ddotphi0} in \eqref{eqn:throwaway1}. Simplifying \eqref{eqn:throwaway1} further, we obtain:
\begin{align}
\minimize_{\lambda_1, \phi_1, \dot{\phi}_1}\ & \frac{1}{2}\tr{\lambda_1}{\lambda_1} \nonumber \\
\phi_1 & = \phi_0 + h \dot{\phi}_1 + O(h^2) \nonumber \\
\dot{\phi}_1 & = \dot{\phi}_0 + h (G\underbrace{\inv{M}(\tr{G}\lambda_1 + f  + O(h))}_{\dot{v}_0} + \dot{G}v_0) + O(h^2) \\
\lambda_1 & \geq - K\phi_1 - B\dot{\phi}_1 - \hat{M}\dot{G}v + O(h) \nonumber \\
\lambda_1 & \geq 0, \nonumber
\end{align}
from \eqref{eqn:throwaway2}, where the $O(h)$ term again comes from Lemma~1. As before, that term is scaled by $h$ and thus becomes subsumed by the existing $O(h^2)$ term:
\begin{align}
\dot{\phi}_1 & = \dot{\phi}_0 + h (G\inv{M}(\tr{G}\lambda_1 + f) + \dot{G}v_0) + O(h^2).
\end{align}

%I don't attempt to constrain $\phi_1$ to be non-positive because fewer variables to optimize will result, and the forthcoming substitution will allow me to more easily assess convexity and prove problem feasibility. Constraining $\phi_1 \leq 0$ (so that bodies in contact at $t_0$ would remain in contact at $t_1$) would appear to negatively impact problem feasibility. That particular problem is one for further research; constraining $\phi_1 \leq 0$ might result in more efficient event finding and thereby speed the piecewise DAE simulation.

The QP can be simplified by substituting out $\phi_1$ and $\dot{\phi}_1$ in~(\ref{eqn:second-to-last-inequality}). Starting with the former,
\begin{align}
\lambda_1 & \geq - K\underbrace{(\phi_0 + h \dot{\phi}_1 + O(h^2))}_{\phi_1} - B\dot{\phi}_1 - \hat{M}\dot{G}v_0 + O(h),
\end{align}
and then collecting terms yields:
\begin{align}
\lambda_1 & \geq - K\phi_0 - (hK + B)\dot{\phi}_1 - \hat{M}\dot{G}v_0 + O(h).
\end{align}
Then, replacing $\dot{\phi}_1$ results in
\begin{align}
\lambda_1 & \geq - K\phi_0 - (hK + B)\underbrace{(\dot{\phi}_0 + h \inv{\hat{M}}\lambda + hG\inv{M}f + h\dot{G}v_0 + O(h^2))}_{\dot{\phi}_1} - \ldots \nonumber \\
& \qquad \hat{M}\dot{G}v + O(h).
\end{align}
The result after collecting terms and removing the subsumed $hO(h^2)$ term is:
\begin{align}
& (I + (h^2K + hB)\inv{\hat{M}})\lambda_1 \geq \\
& \qquad  - K\phi_0 - (hK + B)(\dot{\phi}_0 + hG\inv{M}f + h\dot{G}v_0) - \hat{M}\dot{G}v_0 + O(h). \nonumber
\end{align}
The purely inequality constrained QP below (with asymptotic error terms removed) is the ultimate result:
\begin{tcolorbox}
\begin{align}
\minimize_{\lambda_1}\ & \frac{1}{2}\tr{\lambda_1}{\lambda_1} \\
(I + (h^2K + hB)\inv{\hat{M}})\lambda_1 & \geq - K\phi_0 - (hK + B)(\dot{\phi}_0 + \ldots \nonumber \\ 
& \qquad \quad  hG\inv{M}f  + h\dot{G}v_0) - \hat{M}\dot{G}v_0 \\
\lambda_1 & \geq 0.
\end{align}
\end{tcolorbox}
$v_1$ and $q_1$ are then computable using~(\ref{eqn:v1}) and~(\ref{eqn:q1}), respectively. This QP is always feasible: simply increasing $\lambda_1$ until all inequality constraints are satisfied yields a feasible point. And the Hessian of the objective function for this QP is the identity matrix, which means that the QP is strictly convex. $\phi_1$ and $\dot{\phi}_1$ are not computed directly in this formulation; contrast with the approach for bilateral constraints described in the previous section.

\subsection{Example: pendulum with soft range-of-limit}
Figure~\ref{fig:unilateral-pendulum} shows the result of this strategy applied to a pendulum system, now using a minimal coordinate $( \theta, \dot{\theta} )$ representation; assuming that the positive $y$-axis points upward, $\theta = 0$ is defined such that the pendulum bob configuration is closest to $y = -\infty$. The pendulum is subject to a soft range of motion limit, imposing $\theta \geq 0$. This example system was simulated from the initial conditions $\{ \theta = \frac{\pi}{2}, \dot{\theta} = 0 \}$.

Figure~\ref{fig:unilateral-pendulum} shows that this system can be simulated stably with large step sizes; as expected, accuracy increases linearly as $h$ decreases.

\begin{figure}[htb]
    \centering
    \includegraphics[width=.95\linewidth]{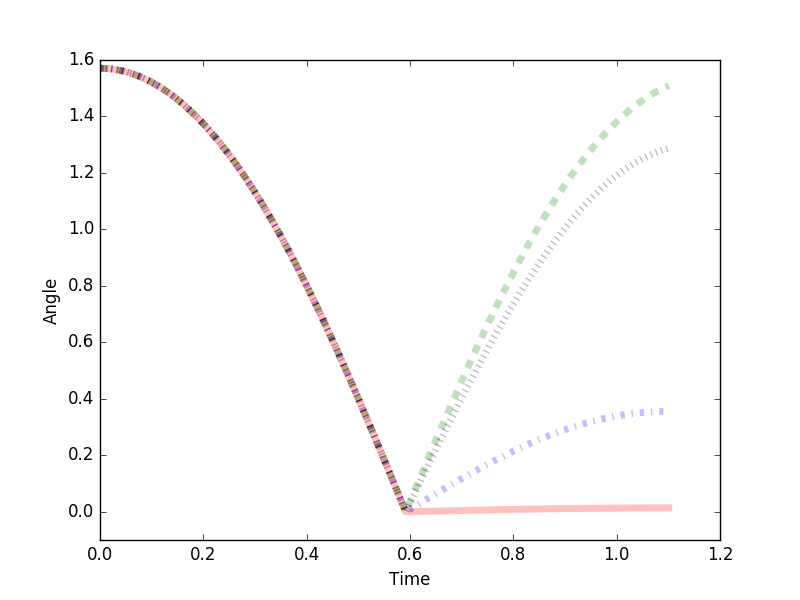}
    \caption{A plot of the pendulum with range of limit constraint at $\theta = 0$, with $k=10^{12}$ and $b=0$ simulated with $h=10^{-5}$ (red, solid), $h=10^{-6}$ (blue, dashed-dotted), $h=10^{-7}$ (grey, dotted) and $h=10^{-8}$ (green, dashed). While the system can be simulated stably for \emph{any} value of $h$, the true, elastic behavior is not evident until the step size becomes small; for values $h \geq 10^{-5}$, the impact against the range-of-motion limit appears to be ``dead''.}
    \label{fig:unilateral-pendulum}
\end{figure}

\subsection{Example: box on an elastic foundation}
\label{section:elastic-foundation-example}
As a second example \emph{that will illustrate pathological worst-case performance for our method}, consider a rigid cube (with side length $s=1$m and density $8$kg$/$m$^3$) resting on an elastic foundation (e.g., Winkler model) of depth 1m with a variable number of elements (\{ 100, 900 \}). Assume that the contact is frictionless. The system is simulated for one second of virtual time, starting from the box resting (zero velocity) on top of the elastic foundation at full extension (no deformation). I modeled the foundation as very stiff, using Young's Modulus values of $E = 10^{11}$ N/m$^2$ (about that of steel) and $E = 10^7$ N/m$^2$ (about that of rubber) and a damping value of $B = I \frac{\textrm{N} \cdot \textrm{s}}{\textrm{m}}$. Note that increasing the number of elastic elements from 100 to 900 decreases the stiffness ($K$) of each element in the foundation. $K$ is easily determined by multiplying the Young's Modulus by the (uniform) element area and dividing by the depth of the foundation.   

I additionally applied a force (of variable magnitude \{ 5N, 50N \}) to the top of the cube at the point $s/2 \tr{\begin{bmatrix} \cos{(10t)} & \sin{(10t)} & 1 \end{bmatrix}}$ (in the cube frame). The force is directed downward (i.e., it is always in the direction of gravity). The constraint-based approach was tested against MATLAB's \texttt{ode15s} implicit integrator, which I set to run in first-order for a equal comparison. I also validated the results using MATLAB's \texttt{ode45}, which provides a high accuracy solution. I used a step size of $0.01$s for both my constraint-based approach and MATLAB's integrators. 

Table~\ref{table:constraint-vs-ode15s} summarizes the timing results, which I will now discuss . First, the $5$N force results in a steady-state solution, which \texttt{ode15s} is able to exploit. Indeed, the integrator requires essentially the same computational time to simulate to the designated end time (1s) or nearly any time beyond that. On the other hand, the $50$N force does not yield a steady state solution: the applied force causes a rocking behavior where the side of the cube opposite the point of application to separate slightly from the foundation. The result is that the \texttt{ode15s} runs much slower on this scenario.

Table~\ref{table:constraint-vs-ode15s} also highlights the disparity in running times when the number of elements is increased. The constraint-based approach scales cubically with the number of elements. The running time of \texttt{ode15s} is dominated by a $O(m^2) f(n)$ factor to form the necessary Jacobian matrix, where $m$ is the number of velocity variables and $f(n)$ is the running time of the collision detection algorithm. The collision detection algorithm for this particular example is relatively inexpensive and runs in linear time in the number of elements. The reason that implicit integrators like \texttt{ode15s} are fast at integrating steady state problems is because the Jacobian matrices can be reused. For the non-steady state problem, the Jacobians may require regular reconstruction (which requires significant computation), in which case the constraint-based method may be faster even when many elastic elements are used.

I note that the constraint-based method works reasonably well on all problems. It does not run for thousands of seconds on any of the examples. Its running time suffers from neither computational stiffness nor rapidly changing dynamics; it scales predictably as the number of constraints increase. Additionally, we can imagine re-using Jacobian matrices, applying warm starting, and employing other computational tricks that MATLAB's mature ODE integrators exploit for solving initial value problems.  

\begin{table}[htpb]
    \centering
    \begin{tabular}{|c|c|c|c|c|c|c|c|}
    \hline
\multicolumn{2}{|c|}{} & \multicolumn{3}{c|}{$E = 10^7$  N/$m^2$} & \multicolumn{3}{c|}{$E = 10^{11}$ N/$m^2$} \\ \hline
elastic & load & constraint-based & \texttt{ode15s} & \texttt{ode45} & constraint-based & \texttt{ode15s} & \texttt{ode45} \\ 
elements & & approach & && approach && \\ \hline
100 & 5N & 0.331s & 0.302s & 28.0s & 0.286s & 0.189s & 27.4s \\ \hline
100 & 50N & 0.332s & 9.55s & 15.5s & 0.396s & 5550s & 621s \\ \hline
900 & 5N & 7.40s & 4.75s & 829s & 8.23s & 1.10s & 17100s \\ \hline
900 & 50N & 12.98s & 27.3s & 346s & 26.00s & 2.31s & 17400s \\ \hline
    \end{tabular}
    \caption{Timings (in seconds) to simulate the cube-on-elastic-foundation scenario with both the constraint-based approach and other integration techniques. }
    \label{table:constraint-vs-ode15s}
\end{table}

The error of the constraint-based approach with $h = 0.01$ on the elastic foundation problem is provided in Table~\ref{table:error}. I compared final states against those obtained from $\texttt{ode45}$: it's locally accurate to fourth-order and uses error control, and I have treated it as the ground truth. Absolute and relative errors are computed after one second of simulation time. These errors represent the Euclidean norm of the generalized coordinates (position and orientation) of the rigid body. The first four rows of the table show that absolute and relative error are quite good when the scenario is not particularly dynamic (follows from little loading). The last four rows of the table show significant error, though not instability.

\begin{figure}[t]
    \centering
    \includegraphics[width=.7\linewidth]{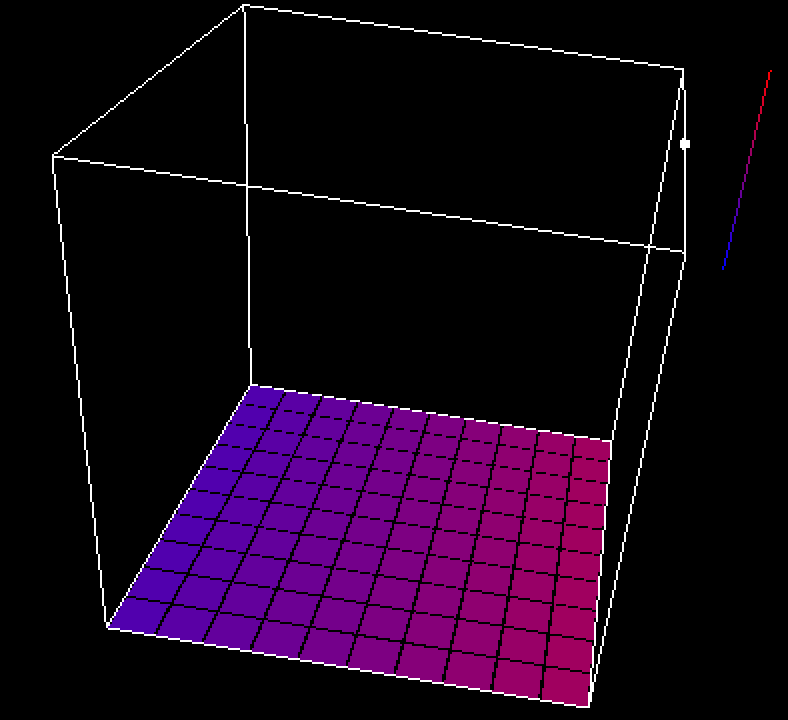}
    \caption{Depiction of forces on the cube/elastic-foundation example described  in~\S\ref{section:elastic-foundation-example}. The small white sphere on the upper right corner of the cube shows the point of application of the external, sinusoidal force. The magenta coloring of the elastic foundation elements indicate that pressure is greater on the right side of the box, as expected (pressure is not concentrated under the point of application since the box is modeled as rigid).}
    \label{fig:elastic-foundation}
\end{figure}

With respect to modeling breadth, Figure~\ref{fig:elastic-foundation} shows that the constraint based approach is able to reasonably simulate compliant contact, including generating cogent pressure distributions. In the next section, we will see that the constraint-based approach can simulate Coulomb friction \emph{without increasing computational hardness} and with minimal creep, which a purely ODE-based approach would struggle to effect efficiently.

\begin{table}[htpb]
\centering
\begin{tabular}{|c|c|l|l|l|}
\hline
Load & Number of elements & Elastic modulus (N/$m^2$) &     Absolute error        & Relative error \\ \hline
5 &    100                & $E = 10^7$     & $2.96 \times 10^{-5}$ & $2.65 \times 10^{-5}$ \\ \hline
5 &    100                & $E = 10^{11}$    & $2.91 \times 10^{-7}$ & $2.60 \times 10^{-7}$ \\ \hline
5 &    900                & $E = 10^7$     & $1.98 \times 10^{-5}$ & $1.77 \times 10^{-5}$ \\ \hline
5 &    900                & $E = 10^{11}$    & $2.71 \times 10^{-9}$ & $2.42 \times 10^{-9}$ \\ \hline
50 &    100                & $E = 10^7$    & $0.813$               & $0.724$ \\ \hline
50 &    100                & $E = 10^{11}$   & $0.814$               & $0.728$ \\\hline
50 &    900                & $E = 10^7$    & $1.0 \times 10^{-2}$  & $8.87 \times 10^{-3}$ \\ \hline 
50 &    900                & $E = 10^{11}$   & $1.0 \times 10^{-2}$ & $8.95 \times 10^{-3}$ \\ \hline 
\end{tabular}
\caption{\label{table:error}Accuracy for the constraint-based approach at $h = 10^{-2}$. Error compared against \texttt{ode45}. \emph{Error improves
significantly for} $h = 10^{-3}$ (not listed above); the absolute errors for 100 elements with $E = 10^7$ N/$m^2$ (fifth row) and $E = 10^11$ N/$m^2$ (sixth row) drop to $3.14 \times 10^{-3}$ and $9.22 \times 10^{-5}$, respectively.}
\end{table}

% 1e7, 100 element, 5N constraint-based soln error norm: 2.96 x 10^-5 (abs), 2.65 x 10^-5 (rel)
% ..., 900 element, ...: 1.98 x 10^-5 (abs), 1.77 x 10^-5 (rel)
% ..., 100 element, 50N: 0.813 (abs), 0.724 (rel) [step size of 1e-3 yields 3.14 x 10^-3 abs error)
% ..., 900 element, 50N: 1.00 x 10^-2 (abs), 8.87 x 10^-3 (rel)
% 1e11, 100 element, 5N:  2.91 x 10^-7 (abs), 2.60 x 10^-7 (rel)
% ..., 900 element, ...:  2.71 x 10^-9 (abs), 2.42 x 10^-9 (rel)
% ..., 100 element, 50N:  0.814 (abs), 0.728 (rel) [step size of 1e-3 yields 9.22 x 10^-5 abs error)
% ..., 900 element, 50N:  1.00 x 10^-2(abs), 8.95 x 10^-3 (rel)

\section{Modeling Contact with Coulomb friction}
\label{section:modeling-contact}
The path to modeling contact and friction extends the concept described in the previous section. Assume that the unilateral constraint from that section now models two bodies contacting over a surface, and that $\phi_n \to \mathbb{R}$ corresponds to the signed depth of indentation; for sake of ready interpretability and without lack of generality, consider one of the bodies to be rigid and the other to be compliant. The unilateral constraint,
\begin{align}
\hat{M}_n\ddot{\phi}_n + B\dot{\phi}_n + K\phi_n = \hat{f}_n,    
\end{align}
introduces ``n'' subscripts for the variables above to help us distinguish them from other variables later on this section. This constraint prescribes the stiffness and damping in the direction of the surface normal, which results in the integral of the pressure over the surface, and this integral will be denoted $\lambda_n$. The pressure at any point on that surface would then be given by $\frac{\lambda_n}{a}$, where $a$ is the area of the contact surface. Like the elastic foundation example in the previous section, the contact surface can be divided into numbers of discrete elements in order to approximate the pressure distribution with greater accuracy.

But let us return to considering a single spring element representing the surface deformation. Assuming that the contact exists in the context of a 2D system, a second spring---acting \emph{orthogonally} to the first spring---can model Coulomb friction (see Figure~\ref{fig:song-model}); I will denote this constraint $\phi_r$ (and its corresponding force $\lambda_r$). If the contact exists in the context of a 3D system, two such orthogonal springs would be necessary: one for each of two basis directions orthogonal to the first spring. The two (or three) spring-dampers corresponding to each spring-damper element can be imagined as connected in series (also shown in Figure~\ref{fig:song-model}).

\begin{figure}[htpb]
\includegraphics[width=\linewidth]{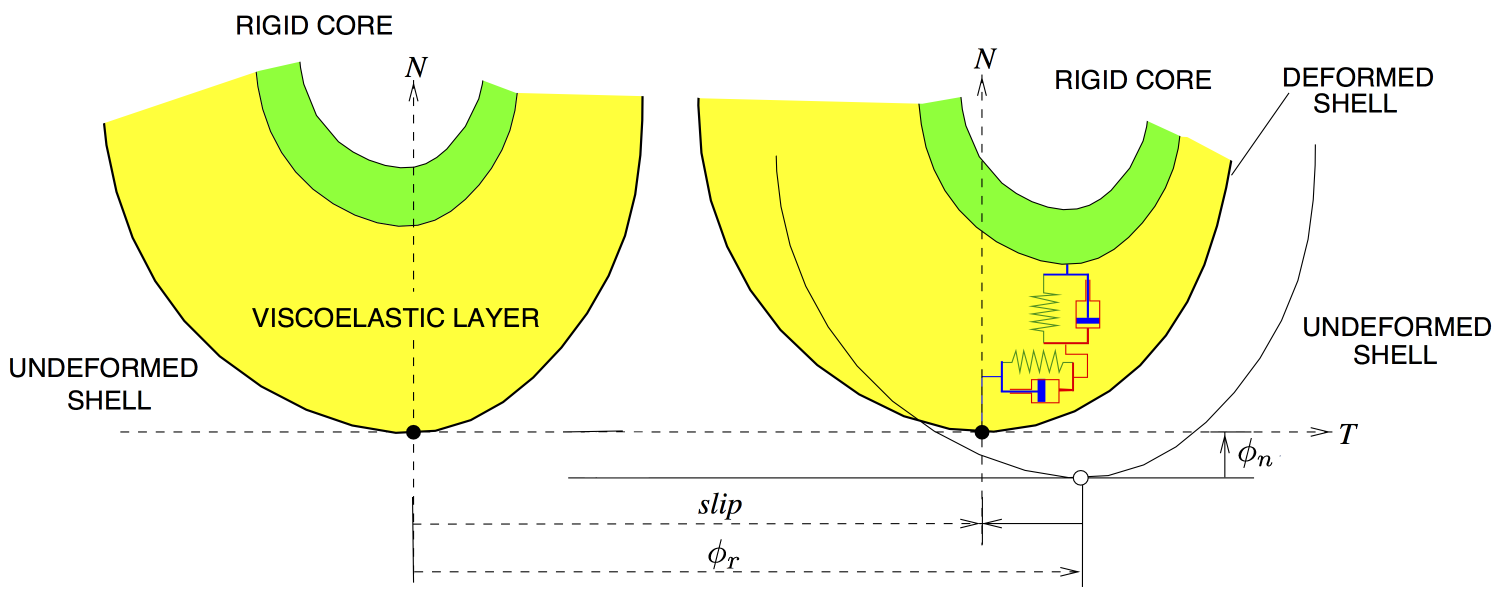}
\caption{A depiction of the model in~\cite{Song:2000a} (illustration adapted from there as well), with the normal and tangential deformations to the surface normal at a presumed point of contact (the case of the deformation, i.e., a contact with another body, is not shown). Note that the virtual springs lie in series.
\label{fig:song-model}}
\end{figure}

Song et al.~\cite{Song:2000a} was the first to propose this particular model of compliant contact, to my knowledge.  To concretize the meaning of the constraints:
\begin{itemize}
\item $\phi_n \leq 0$ represents the negated amount of deformation \emph{along the contact normal} between two bodies at a point of contact. Disjoint (i.e., not touching) bodies do not deform, in which case $\phi = 0$. Lacoursiere noted that $\phi_n$ can also represent a negated potential energy function~\cite{Lacoursiere:2007}.
\item $\dot{\phi_n}$ represents the rate of deformation \emph{along the contact normal} between two bodies at a point of contact. $\dot{\phi} > 0$ indicates that deformation is decreasing. For two \emph{rigid} bodies (i.e., $k = \infty$), $\dot{\phi_n}$ represents the relative velocity at the point of contact projected along the contact normal, where positive $\dot{\phi}$ indicates that two bodies are moving apart at the point of contact.
\item $\phi_r$ represents the amount of signed deformation \emph{orthogonal to the contact normal} between two bodies at a point of contact. 
\item $\dot{\phi_r}$ represents the rate of deformation \emph{orthogonal to the contact normal} between two bodies at a point of contact. For two \emph{rigid} bodies, $\dot{\phi_r}$ represents the relative velocity at the point of contact projected orthogonal to the contact normal. $\dot{\phi}_r = 0$ and $\dot{\phi}_r \neq 0$ correspond to bodies \emph{sticking/rolling} and \emph{sliding}, respectively, at the point of contact.  
\end{itemize}

As was the case in the previous section, we will typically not track (integrate) $\phi_n$, $\dot{\phi}_n$, $\phi_r$, and $\dot{\phi}_r$ directly. $\phi_n$, $\dot{\phi}_n$, and $\dot{\phi}_r$ are often computed using the state $(q, v)$ of the multibody system: for $q$ corresponding to overlapping \emph{undeformed} geometries, a mapping to a kissing, deformed configuration will be computed from which $\phi_n$, $\dot{\phi}_n$, and $\dot{\phi}_r$ can be computed. $\phi_r$ could be tracked by augmenting the system's state variables, but this can be avoided by making the friction model \emph{memoryless} (discussed immediately below). 

\subsection{DPP formulation of a contact constrained system with Coulomb friction, (applicable for $\phi_n \leq 0$)}
I focus on Coulomb friction rather than another friction model because the former is simple  and captures important stick-slip transitions. It also does not incorporate tangential deformation ``memory'', which simplifies the model and its implementation significantly.  In keeping with this simple model of friction, I avoid discriminating between sticking and sliding friction states, which  simplifies both the contact model and its implementation. That lack of discrimination causes the differential equations for the constraint dynamics (the $\phi$ variables) to become numerically stiff (see~\cite{Stewart:2000a}), which does not hamper the efficiency of my implicit approach. The DPP model of a contact constrained with Coulomb friction using my scheme follows:
\begin{align}
\minimize_{\lambda_n, \lambda_r, \phi_n, \dot{\phi}_n, \phi_r, \dot{\phi}_r, \beta_r }\ &  \frac{1}{2}\tr{\lambda_n}\lambda_n + \frac{1}{2}\tr{\lambda_r}\lambda_r + \frac{1}{2}\tr{\beta_r}{\beta_r} \label{eqn:QP-plus-friction-objective}  \\
\dot{q} = &\ Nv \label{eqn:QP-plus-friction-dotq} \\
M\dot{v} = &\ \tr{G_n}\lambda_n + \tr{G_r}\lambda_r + f \label{eqn:QP-plus-friction-dotv} \\
\hat{f}_n = &\ \hat{M}_n\ddot{\phi}_{n} + B\dot{\phi}_n + K\phi_n \label{eqn:dotvarphin} \\
\hat{f}_r = &\ \hat{M}_r\ddot{\phi}_{r} + B\dot{\phi}_r + K\phi_r \label{eqn:dotvarphir} \\
\lambda_n \geq & -K_n\phi_n - B_n\dot{\phi}_n - \hat{M}_n\dot{G}_nv \label{eqn:QP-plus-friction-normal-spring} \\
\lambda_n \geq &\ 0 \label{eqn:QP-plus-friction-compressive-normal} \\
\lambda_r = & -K_r\phi_r - B_r\dot{\phi}_r - \hat{M}_r\dot{G}_rv + \beta_r \label{eqn:QP-plus-friction-tangent-spring} \\
\lambda_r \leq &\ \mu \lambda_n \label{eqn:QP-plus-friction-friction-cone1} \\
-\lambda_r \leq &\ \mu \lambda_n \label{eqn:QP-plus-friction-friction-cone2}.
\end{align}
Similarities between the last QP and~(\ref{eqn:QP-plus-friction-dotq})--(\ref{eqn:dotvarphin}) and~(\ref{eqn:QP-plus-friction-normal-spring})--(\ref{eqn:QP-plus-friction-compressive-normal}) in this QP should be evident. The novel equations are~(\ref{eqn:dotvarphir}) which describes the tangential spring dynamics,~(\ref{eqn:QP-plus-friction-friction-cone1}) and (\ref{eqn:QP-plus-friction-friction-cone2}) which limit the amount of frictional force according to the Coulomb friction model, and~(\ref{eqn:QP-plus-friction-tangent-spring}); the latter will be discussed in detail below. Another difference from the last QP includes separate stiffness/damping terms for normal and tangential directions ($K_n/B_n$ and $K_r/B_r$, respectively).

Neither problem feasibility nor convexity of the above QP will be analyzed here. A time-discretized, more general version (extended with 3D friction constraints, bilateral constraints, and generic unilateral constraints) of the QP-based method introduced immediately below will be analyzed in Appendix~\ref{appendix:minimizing-variables} and found to be feasible and strictly convex. But for now, we examine the presence of the $\beta_r$ term in ~(\ref{eqn:QP-plus-friction-tangent-spring}) by first noting that the nominal constraints for $\lambda_r$, i.e.,
\begin{align}
\lambda_r  = -K_r\phi_r - B_r\dot{\phi}_r - \hat{M}_r\dot{G}_rv \qquad & \textrm{ (spring/damper behavior)} \label{eqn:no-beta} \\
|\lambda_r| \leq \mu \lambda_n \qquad & \textrm{ (Coulomb friction),} 
\end{align}
generally conflict. In response, we enforce one ``hard'' constraint (Coulomb friction) and one ``soft'' one (the spring-damper like behavior) along the contact tangent; the QP finds the frictional forces that respect Coulomb friction yet come as close to the desired damped, oscillatory behavior by introducing the $\beta_r$ term into~(\ref{eqn:no-beta}), yielding ~(\ref{eqn:QP-plus-friction-tangent-spring}). This desired behavior is encouraged by the objective function (Equation~\ref{eqn:QP-plus-friction-objective}), which quadratically penalizes the $\beta_r$ term.

Examining the objective function in detail, notice that each dimension of $\lambda_n$, $\lambda_r$, and $\beta_r$ corresponds to consistent units (e.g., Newtons). Although the units are consistent, it might be conceivable that the solution to the QP increases $\lambda_n$ in order to decrease $\beta_r$. However, the criteria $\tr{\lambda_n}\lambda_n$ and $\tr{\beta_r}\beta_r$ should not be equally valued in the optimization: the former term penalizes deviation from the nonlinear equality constraint $\lambda_n = \max{(0, -K_n\phi_{n} - B_n\dot{\phi}_{n} - \hat{M}_n\dot{G}_nv})$, so a significant weight (penalty) should be assigned to the term $\tr{\lambda_n}\lambda_n$ (I used $1/h$ in my implementation so as to obtain greater accuracy as $h$ is decreased), while keeping the weights of all other terms to unity. This weighting factor will be introduced using the term $\zeta$.

\subsection{The time-discretized DPP (for contact problems in 2D)}
\label{section:formulating-2d-contact problems}
After we discretize in time, we can introduce specific discrete time variables to be determined via optimization. This quadratic program serves as the implicit integration scheme for $\phi_n, \dot{\phi}_n, \phi_r$, and $\dot{\phi}_r$. As in Section~\ref{section:formulating-generic-unilateral-constraints}, the constraint dynamics described in \eqref{eqn:dotvarphin} and \eqref{eqn:dotvarphir} are redundant and need not be considered further. Also as in Section~\ref{section:formulating-generic-unilateral-constraints}: once $\lambda_{n_1}$ and $\lambda_{r_1}$ have been computed, then $v_1$ and $q_1$ are found by evaluating $v_1 = v_0 + h\inv{M}(\tr{G_n}\lambda_{n_1} + \tr{G_r}\lambda_{r_1} + f)$ and $q_1 = q_0 + hNv_1$.
\begin{align}
\minimize_{\lambda_{n_1}, \lambda_{r_1}, \phi_{n_1}, \dot{\phi}_{n_1}, \phi_{r_1}, \dot{\phi}_{r_1}, \beta_r }\ &  \frac{\zeta}{2}\tr{\lambda_{n_1}}\lambda_{n_1} + \frac{1}{2}\tr{\lambda_{r_1}}\lambda_{r_1} + \frac{1}{2}\tr{\beta_r}{\beta_r} \label{eqn:2d-contact-QP-objective-function} \\
\phi_{n_1} = &\ \phi_{n_0} + h \dot{\phi}_{n_1} + O(h^2)\\
\dot{\phi}_{n_1} = &\ \dot{\phi}_{n_0} + h \ddot{\phi}_{n_0}  + O(h^2) \\
\phi_{r_1} = &\ \phi_{r_0} + h \dot{\phi}_{r_1}  + O(h^2) \\
\dot{\phi}_{r_1} = &\ \dot{\phi}_{r_0} + h\ddot{\phi}_{r_0}  + O(h^2) \\
\lambda_{n_1} \geq & -K_n\phi_{n_1} - B_n\dot{\phi}_{n_1} - \hat{M}_{n}\dot{G}_{n}v_0 + O(h) \\
\lambda_{n_1} \geq &\ 0 \\
\lambda_{r_1} = & -K_r\phi_{r_1} - B_r\dot{\phi}_{r_1} - \hat{M}_{r}\dot{G}_{r}v_0 + \beta_r + O(h)\\
\lambda_{r_1} \leq &\ \mu \lambda_{n_1} \\
-\lambda_{r_1} \leq &\ \mu \lambda_{n_1}. \label{eqn:2d-contact-QP-last-constraint-function}
\end{align}
I will show in Appendix~\ref{appendix:minimizing-variables} that the ``free'' problem variables ($\phi_{n_1}, \dot{\phi}_{n_1}, \phi_{r_1}, \dot{\phi}_{r_1}$, and $\beta_r$) can be eliminated, reducing the problem to a purely inequality constrained QP in many fewer variables (two in this case:  $\lambda_{n_1}$ and $\lambda_{r_1}$). Appendix~\ref{appendix:minimizing-variables} also shows that the QP is both feasible and strictly convex.

We will next discuss contact problems in 3D. Before we do, note that the Coulomb friction constraints (e.g., Equations~\ref{eqn:QP-plus-friction-friction-cone1} and \ref{eqn:QP-plus-friction-friction-cone2}) do not specify which direction the force should be applied. If the friction model is truly Coulomb, however, it will be memory-less (implying $K_r = 0$); in that case, $\lambda_{r_1}$ will oppose $\dot{\phi}_{r_1}$, as consistent with the principle of maximum power~\cite{Goyal:1988} from the Coulomb friction model. In other words, setting \mbox{$K_r = 0$} allows the model to faithfully emulate the Coulomb friction model.

\subsection{Three-dimensional contact}
In the three dimensional contact version, we add three more variables per point contact between two bodies---$\phi_{s_1}$, $\dot{\phi}_{s_1}$, and $\lambda_{s_1}$--- which correspond to deformation, deformation time derivative, and applied force for the spring-damper in a second basis direction orthogonal to the contact normal (the spring-damper corresponding to the $r$ index is in another such basis direction). We also need to account for a proper friction ``cone'', as was done using two inequality constraints in~(\ref{eqn:QP-plus-friction-friction-cone1}) and~(\ref{eqn:QP-plus-friction-friction-cone2}); this will be effected using a quadratic constraint for each of the $m$ points of contact. Other than this constraint, the now \emph{quadratically constrained quadratic program} (QCQP) will appear much like the quadratic program used in the DPP from the last section:
\begin{align}
\minimize_{\lambda_{*_1}, \phi_{*_1}, \dot{\phi}_{*_1}, \beta_r, \beta_s } & \frac{\zeta}{2}\tr{\lambda_{n_1}}\lambda_{n_1} + \frac{1}{2}\tr{\lambda_{r_1}}\lambda_{r_1} + \frac{1}{2}\tr{\lambda_{s_1}}\lambda_{s_1} + \ldots \nonumber \\
& \frac{1}{2}\tr{\beta_r}{\beta_r} + \frac{1}{2}\tr{\beta_s}{\beta_s} \\
\dot{q}_1 = &\ Nv_1 + O(h^2) \nonumber \\
M\dot{v}_0 = &\ \tr{G_n}\lambda_{n_1} + \tr{G_r}\lambda_{r_1} + \tr{G}_s\lambda_{s_1} + f + O(h) \nonumber \\
\phi_{n_1} = & \phi_{n_0} + h \dot{\phi}_{n_1} + O(h^2) \\
\dot{\phi}_{n_1} = & \dot{\phi}_{n_0} + h \ddot{\phi}_{n_0} + O(h^2) \\
\phi_{r_1} = & \phi_{r_0} + h \dot{\phi}_{r_1}  + O(h^2) \\
\dot{\phi}_{r_1} = & \dot{\phi}_{r_0} + h\ddot{\phi}_{r_0}   + O(h^2) \\
\phi_{s_1} = & \phi_{s_0} + h \dot{\phi}_{s_1}  + O(h^2) \\
\dot{\phi}_{s_1} = & \dot{\phi}_{s_0} + h\ddot{\phi}_{s_0}  + O(h^2) \\ 
\lambda_{n_1} \geq & -K_n\phi_{n_1} - B_n\dot{\phi}_{n_1} - \hat{M}_{n}\dot{G}_{n}v_0 + O(h) \\
\lambda_{n_1} \geq &\ 0 \\
\lambda_{r_1} = & -K_r\phi_{r_1} - B_r\dot{\phi}_{r_1} - \hat{M}_{r}\dot{G}_{r}v_0 + \beta_r  + O(h) \\
\lambda_{s_1} = & -K_s\phi_{s_1} - B_s\dot{\phi}_{s_1} - \hat{M}_{s}\dot{G}_{s}v_0 + \beta_s  + O(h) \\
\mu_i^2 \lambda_{n_i}^2 & \geq \lambda_{r_i}^2 + \lambda_{s_i}^2 \textrm{ for } i = 1,\ldots,m. \label{eqn:coulomb-3d}
\end{align}
where $\lambda_{*_1}$ represents the variables $\lambda_{n_1}$, $\lambda_{r_1}$ and $\lambda_{s_1}$; $\phi_{*_1}$ represents the variables $\phi_{n_1}$, $\phi_{r_1}$, and $\phi_{s_1}$; and $\dot{\phi}_{*_1}$ represents the variables $\dot{\phi}_{n_1}$, $\dot{\phi}_{r_1}$, and $\dot{\phi}_{s_1}$.

\subsection{Example: box on an inclined plane}
Examples with contact and friction require greater sophistication than the very simple examples I have used prior to now. A good example with contact and friction should demonstrate multiple phenomena: \1 no frictional forces are applied when $\mu = 0$; \2 bodies remain in stiction given sufficiently large $\mu$; \3 all other variables held constant, contacting bodies slide against each other farther with lower $\mu$; and \4 frictional forces should oppose the direction of motion.

To demonstrate these phenomena, I implemented the equations above as a plugin for \texttt{RPI-MATLAB-Simulator} (using \texttt{MATLAB} R2015b) and tested their ``sliding block'' example, which consists of a box of unit mass, width 0.1m, length 0.2m, and height 0.05m that is initially motionless on a ramp inclined $15^{\circ}$. \texttt{RPI-MATLAB-Simulator} models the contact between the block and the ramp using point contacts, which means that the box should be conceived as having four spherical ``feet'' attached to its bottom. To simplify this example, we will model these feet as perfectly rigid and the ramp as an elastic halfspace, which will allow us to use Hertzian contact theory to determine the stiffness ($K_n$) for a given deformation, $d$. Hertzian contact predicts the magnitude of the force from deformation as
\begin{align}
f & = E^* \sqrt{rd^3},
\end{align}
where $E^*$ comprises relative moduli (Young's Modulus and Poisson's Ratio) and $r$ is the radius of the spherical feet. For this example problem, let us assume a radius of 0.01m. We now have the information with which to determine $\phi_n$ and $K_n$. Specifically, we will define the diagonal entries of $K_n$ as
\begin{align}
K_{n_{ii}} & = E^* \sqrt{r},
 \end{align}
where we will assume that the elastic halfspace is comprised of a material with the rigidity of steel (recall that the feet are rigid), so $E^*$ will be $100$ GPa = $10^{11}$ $\textrm{N/m}^2$. $\sqrt{r}$ will be $\frac{1}{10}$ $\textrm{m}^{1/2}$, yielding a pseudo-stiffness of $10^{10}$ $\textrm{N/m}^{1/2}$ (it is only a pseudo-stiffness because it is not quite in units of N/m). That is okay because we will define $\phi_{n_i}$ ($i$ refers to the index of one of the spherical feet) to return units of $\textrm{m}^{3/2}$ rather than m:
\begin{align}
\phi_{n_i} & = d_i^{3/2}.
\end{align}
\texttt{RPI-MATLAB-Simulator} provides the deformation depth, $d_i$ for each contacting sphere/halfspace combination  (it does not compute deformation for the block feet that are not in contact). Selecting damping parameters is never so straightforward; even for highly accurate models, damping must usually be set subjectively to realize qualitatively desired behavior. I set the diagonal entries of $B_n$ to unity as a guess to effect ``a little damping''. I set the diagonal entries of $K_{r}$ and $K_s$ to zero to emulate the Coulomb friction model, which is memoryless as I noted earlier. 

It should be clear that $B_r$ and $B_s$ (called $B$ in the remainder of this paragraph) should be set as large as possible. We want the friction coefficient to limit the force applied, not the dynamics of the virtual damper. Unfortunately, the numerical range of the optimization problem increases with $B$: the number of digits of precision required is $O(hB)$, which can challenge the default tolerances used by some  numerical libraries. I found that setting $B = \frac{10^6}{h}$ meters results in no creep and no failed solves.    
 
In the context of this example, I varied friction using $\mu = \{ 0, 0.125, 0.25, 0.375 \}$. I guaranteed that there were no frictional forces applied when $\mu = 0$, and we observed the box remain in stiction at $\mu = 0.375$. Between those values, the box slid for distances inversely proportional to the coefficient of friction, as Figure~\ref{fig:sliding-box} shows. I arbitrarily selected a step size of 0.01s for simulating the scenario.   

The normal component of the contact forces tends to lie around $2.5$N per point in this example, which is consistent with the gravitational force of $9.8$N acting on this block. From Hertzian contact theory, this amount of force indicates that the deformation should be around $10^{-8}$m, which seems reasonable given the load and the material. 

\begin{figure}[htbp]
\centering
\includegraphics[width=\linewidth]{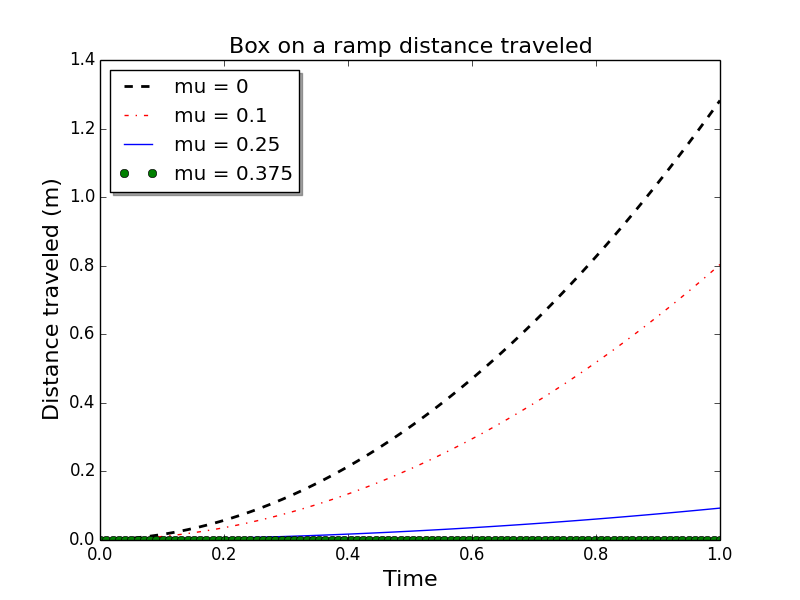}
\caption{The distance that the box on the inclined ramp travels over a second for various coefficients of friction. }
\label{fig:sliding-box}
\end{figure}

\subsection{Example: stack of spheres}
I used the example of stacks of spheres to assess how well the solver scales computationally with numbers of  contact constraints. Three constraints (corresponding to $\phi_n, \phi_r, \phi_s$) were used between each pair of contacting spheres. Although a large number of contacting spheres will generate a large number of optimization variables, the problem is inherently sparse: two contacting spheres yield only a $3 \times 3$ dense submatrix in the problem Hessian matrix. If each sphere touches at most two others, the ceiling on the total number of nonzero Hessian entries will be $18m$, where $m$ is the number of spheres, while the Hessian matrix is expected to be $3m \times 3m$, i.e., $9m^2$ total entries.

The simulation was started with all spheres in mid-air, spaced evenly apart vertically (all spheres are in perfect horizontal alignment), and at rest. As the simulation progresses, the bottom sphere collides with the ground, the second sphere collides with the bottom sphere, etc. Wave propagation effects moving up the stack and due to the various collisions are clearly visible in the accompanying video. The stability of the approach is such that the spheres are able to form a stable stack without any constraints (e.g., walls around the stack) keeping them balanced.

I simulated the stack of spheres using \texttt{RPI-MATLAB-Simulator} for 500 iterations at a step size of 10 ms (selected arbitrarily). I tested stacks of up to 100 spheres, each of which used Young's Modulus of $E = 10^{11} \textrm{N}/\textrm{m}^2$ and damping parameter $B = I\frac{\textrm{N}\cdot \textrm{s}}{\textrm{m}}$. Performance results are plotted in Figure~\ref{fig:stack-performance}. Although explicit schemes should be capable of simulating stacks of softer spheres much more quickly, we are not aware of a competing approach that can robustly simulate stacks of such stiff spheres nearly as fast (cf.~\cite{Lacoursiere:2003}, which describes numerous issues with simulating stacks of rigid objects quickly and robustly). Thus, we trade peak performance at simulating contacting rigid bodies over a subset of material properties for good overall performance on arbitrary materials.  

\begin{figure}[htpb]
\includegraphics[width=\linewidth]{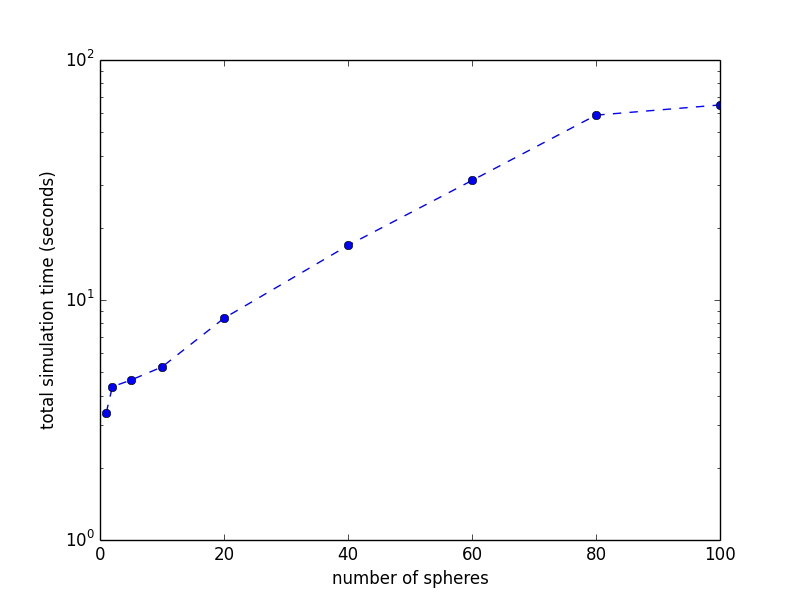}
\caption{Times (vertical axis) required to simulate stacks of numbers of spheres (horizontal axis) for one second of virtual time. Each sphere in the stack adds three variables to the convex optimization problem.
\label{fig:stack-performance}}
\end{figure}

\section{Discussion}
\label{section:discussion}
My approach is first-order accurate, assuming that all events are isolated. For simulation, the approach should be competitive (if not much faster) and more numerically robust than approaches that model constraints as rigid: convex optimization problems are far easier to solve than complementarity problems. My method should be faster on many problems than Newton-Raphson-based implicit integrators for simulations than model constraints as springs and dampers; the exception will be those problems that have many more constraints than state variables (like the example in \S\ref{section:unilateral-msd}, for which my method still exhibited the best worst-case performance compared to state of the art explicit and implicit integrators). And my method has the advantage of being very easy to implement, despite the appearances from the pages of algebra that produced the final simplifications.

This article did not explore applications other than simulation (i.e., control, state estimation, and system identification). Compliant models used in those applications all suffer from the drawback that deformation can be very hard, if not infeasible, to measure particularly when materials are stiff and deformations small. My constraint-based approach seems well suited for these applications because it can estimate forces in the absence of deformation measurements (by treating the deformation as zero) and because it does not suffer from the problems of rigid models. Ultimately, I expect this kind of quasistatic treatment to be reasonable when the constraint acts more plastic than elastic (e.g., like when an object is contacting mud) or when the constraint is very stiff and vibrations dissipate rapidly. I plan to investigate the reasonableness of this quasistatic treatment in future work. 

I also plan future investigations that attempt to exploit this constraint solver to do faster and more accurate state estimation, control, and system identification in applications where robots interact with their environments through contact.   

\section*{Acknowledgements}
I thank Dylan Shell, Paul Mitiguy, Hongkai Dai, Ryan Elandt, and Andy Ruina for providing excellent feedback on the manuscript and Michael Sherman for posing early critiques that helped shape its direction.

\appendix

\section{Minimizing variables for all constraints (bilateral, unilateral, and contact constraints in 3D)}
\label{appendix:minimizing-variables}

This section considers how to minimize all variables for systems with bilateral constraints (variables with subscript $_b$), unilateral constraints (variables with subscript $_u$), and contact constraints in 3D (variables with subscripts $_n$, $_r$, or $_s$).   I have included this section to keep the reader from having to do these simplifications. We first formulate the necessary differential optimization problem as a straightforward extension of the techniques presented in Sections~\ref{section:formulating-bilateral-constraints}
 and~\ref{section:formulating-generic-unilateral-constraints}:
\begin{align}
\minimize_{\lambda, \phi_1, \dot{\phi}_1, \beta}\ & \frac{1}{2}(\zeta \tr{\lambda_{n_1}}\lambda_{n_1} + \zeta \tr{\lambda_{u_1}}{\lambda_{u_1}} + \tr{\lambda_{r_1}}\lambda_{r_1} + \tr{\beta_r}{\beta_r} + \tr{\lambda_{s_1}}\lambda_{s_1} + \tr{\beta_s}\beta_s) \\
\phi_{u_1} & = \phi_{u_0} + h \dot{\phi}_{u_1} + O(h^2)\\
\dot{\phi}_{u_1} & = \dot{\phi}_{u_0} + h \ddot{\phi}_{u_0} + O(h^2)\\
\phi_{b_1} & = \phi_{b_0} + h \dot{\phi}_{b_1} + O(h^2)\\
\dot{\phi}_{b_1} & = \dot{\phi}_{b_0} + h \ddot{\phi}_{b_0} + O(h^2)\\
\phi_{n_1} & = \phi_{n_0} + h \dot{\phi}_{n_1} + O(h^2)\\
\dot{\phi}_{n_1} & = \dot{\phi}_{n_0} + h \ddot{\phi}_{n_0} + O(h^2)\\
\phi_{r_1} & = \phi_{r_0} + h \dot{\phi}_{r_1} + O(h^2)\\
\dot{\phi}_{r_1} & = \dot{\phi}_{r_0} + h 
\ddot{\phi}_{r_0} + O(h^2) \\
\phi_{s_1} & = \phi_{s_0} + h \dot{\phi}_{s_1} + O(h^2)\\
\dot{\phi}_{s_1} & = \dot{\phi}_{s_0} + h 
\ddot{\phi}_{s_0} + O(h^2)\\
\lambda_{u_1} & \geq - K_u\phi_{u_1} - B_u\dot{\phi}_{u_1} - \hat{M}_{u}\dot{G}_{u}v_0 + O(h)\\
\lambda_{u_1} & \geq 0 \\
\lambda_{n_1} & \geq - K_n\phi_{n_1} - B_n\dot{\phi}_{n_1} - \hat{M}_{n}\dot{G}_{n}v_0 + O(h) \\
\lambda_{n_1} & \geq 0 \\
\lambda_{r_1} & = -K_r\phi_{r_1} - B_r\dot{\phi}_{r_1} - \hat{M}_{r}\dot{G}_{r}v_0 + \beta_r + O(h) \\
\lambda_{s_1} & = -K_s\phi_{s_1} - B_s\dot{\phi}_{s_1} - \hat{M}_{s}\dot{G}_{s}v_0 + \beta_s + O(h) \\
\lambda_{b_1} & = -K_b\phi_{b_1} - B_b\dot{\phi}_{b_1} - \hat{M}_{b}\dot{G}_{b}v_0 + O(h)\\
\mu^2 \lambda_{n_1}^2 & \geq \lambda_{r_1}^2 + \lambda_{s_1}^2.
\end{align}

As with the previous section, this section will minimize the number of variables, equation numbers will only be provided when an equation changes, but the whole QP will be reproduced at each step. We first replace all of the $\phi_1$ terms: $\phi_{b_1}, \phi_{u_1}, \phi_{n_1}, \phi_{r_1}, \textrm{ and } \phi_{s_1}$:
\begin{align}
\minimize_{\lambda, \dot{\phi}_1, \beta}\ & \frac{1}{2}(\zeta \tr{\lambda_{n_1}}\lambda_{n_1} + \zeta \tr{\lambda_{u_1}}{\lambda_{u_1}} + \tr{\lambda_{r_1}}\lambda_{r_1} + \tr{\beta_r}{\beta_r} + \tr{\lambda_{s_1}}\lambda_{s_1} + \tr{\beta_s}\beta_s) \nonumber \\
\dot{\phi}_{u_1} & = \dot{\phi}_{u_0} + h \ddot{\phi}_{u} + O(h^2) \nonumber \\
\dot{\phi}_{b_1} & = \dot{\phi}_{b_0} + h \ddot{\phi}_{b} + O(h^2) \nonumber \\
\dot{\phi}_{n_1} & = \dot{\phi}_{n_0} + h \ddot{\phi}_{n} + O(h^2) \nonumber \\
\dot{\phi}_{r_1} & = \dot{\phi}_{r_0} + h 
\ddot{\phi}_{r_0} + O(h^2) \nonumber \\
\dot{\phi}_{s_1} & = \dot{\phi}_{s_0} + h \ddot{\phi}_{s_0} + O(h^2) \nonumber \\
\lambda_{u_1} & \geq - K_u\phi_{u_0} - (hK_u + B_u)\dot{\phi}_{u_1} - \hat{M}_{u}\dot{G}_{u}v_0 + O(h) \\
\lambda_{u_1} & \geq 0 \nonumber \\
\lambda_{n_1} & \geq - K_n\phi_{n_0} - (hK_n + B_n)\dot{\phi}_{n_1} - \hat{M}_{n}\dot{G}_{n}v_0 + O(h) \\
\lambda_{n_1} & \geq 0 \nonumber \\
\lambda_{r_1} & = -K_r\phi_{r_0} - (hK_r + B_r)\dot{\phi}_{r_1} - \hat{M}_{r}\dot{G}_{r}v_0 + \beta_r + O(h) \\
\lambda_{s_1} & = -K_r\phi_{s_0} - (hK_s + B_s)\dot{\phi}_{s_1} - \hat{M}_{s}\dot{G}_{s}v_0 + \beta_s + O(h) \\
\lambda_{b_1} & = -K_b\phi_{b_0} - (hK_b + B_b)\dot{\phi}_{b_1} - \hat{M}_{b}\dot{G}_{b}v_0 + O(h) \\
\mu^2 \lambda_{n_1}^2 & \geq \lambda_{r_1}^2 + \lambda_{s_1}^2. \nonumber
\end{align}

We then replace all of the $\dot{\phi}_1$ terms: $\dot{\phi}_{b_1}, \dot{\phi}_{u_1}, \dot{\phi}_{n_1}, \dot{\phi}_{r_1}, \textrm{ and } \dot{\phi}_{s_1}$:
\begin{align}
\minimize_{\lambda, \dot{\phi}_1, \beta}\ & \frac{1}{2}(\zeta \tr{\lambda_{n_1}}\lambda_{n_1} + \zeta \tr{\lambda_{u_1}}{\lambda_{u_1}} + \tr{\lambda_{r_1}}\lambda_{r_1} + \tr{\beta_r}{\beta_r} + \tr{\lambda_{s_1}}\lambda_{s_1} + \tr{\beta_s}\beta_s) \nonumber \\
\lambda_{u_1} & \geq - K_u\phi_{u_0} - (hK_u + B_u)\underbrace{(\dot{\phi}_{u_0} + h \ddot{\phi}_{u} + O(h^2))}_{\dot{\phi}_{u_1}} - \ldots \nonumber \\
& \qquad \hat{M}_{u}\dot{G}_{u}v_0 + O(h) \\
\lambda_{u_1} & \geq 0 \nonumber \\
\lambda_{n_1} & \geq - K_n\phi_{n_0} - (hK_n + B_n)\underbrace{(\dot{\phi}_{n_0} + h \ddot{\phi}_{n} + O(h^2))}_{\dot{\phi}_{n_1}} - \ldots \nonumber \\
& \qquad \hat{M}_{n}\dot{G}_{n}v_0 + O(h) \\
\lambda_{n_1} & \geq 0 \nonumber \\
\lambda_{r_1} & = -K_r\phi_{r_0} - (hK_r + B_r)\underbrace{(\dot{\phi}_{r_0} + h 
\ddot{\phi}_{r_0} + O(h^2))}_{\dot{\phi}_{r_1}} - \ldots \nonumber \\
& \qquad \hat{M}_{r}\dot{G}_{r}v_0 + \beta_r + O(h) \\
\lambda_{s_1} & = -K_r\phi_{s_0} - (hK_s + B_s)\underbrace{(\dot{\phi}_{s_0} + h 
\ddot{\phi}_{s_0} + O(h^2))}_{\dot{\phi}_{s_1}} - \ldots \nonumber \\
& \qquad \hat{M}_{s}\dot{G}_{s}v_0 + \beta_s + O(h) \\
\lambda_{b_1} & = -K_b\phi_{b_0} - (hK_b + B_b)\underbrace{(\dot{\phi}_{b_0} + h \ddot{\phi}_{b} + O(h^2))}_{\dot{\phi}_{b_1}} - \ldots \nonumber \\
& \qquad \hat{M}_{b}\dot{G}_{b}v_0 + O(h) \\
\mu^2 \lambda_{n_1}^2 & \geq \lambda_{r_1}^2 + \lambda_{s_1}^2. \nonumber
\end{align}

As demonstrated many times previously, the $hO(h^2)$ terms are subsumed by the $O(h)$ terms, so we make that simplification and also leverage the identities $\dot{v}_0 = \inv{M}(\tr{G_n}\lambda_{n_1} + \tr{G_r}\lambda_{r_1} + \tr{G_s}\lambda_{s_1} + \tr{G_u}\lambda_{u_1} + \tr{G_b}\lambda_{b_1} + f) + O(h)$, $\ddot{\phi}_{n_0} = G_n\dot{v}_0 + \dot{G}_nv_0$, $\ddot{\phi}_{r_0} = G_r\dot{v}_0 + \dot{G}_rv_0$, $\ddot{\phi}_{s_0} = G_s\dot{v}_0 + \dot{G}_sv_0$, $\ddot{\phi}_{u_0} = G_u\dot{v}_0 + \dot{G}_uv_0$, $\ddot{\phi}_{u_0} = G_u\dot{v}_0 + \dot{G}_uv_0$, and $\ddot{\phi}_{b_0} = G_b\dot{v}_0 + \dot{G}_bv_0$:
\begin{align}
\minimize_{\lambda, \beta}\ & \frac{1}{2}(\zeta \tr{\lambda_{n_1}}\lambda_{n_1} + \zeta \tr{\lambda_{u_1}}{\lambda_{u_1}} + \tr{\lambda_{r_1}}\lambda_{r_1} + \tr{\beta_r}{\beta_r}  + \tr{\beta_s}{\beta_s}) \nonumber \\
\lambda_{u_1} & \geq - \hat{M}_{u}\dot{G}_{u}v_0 - K_u\phi_{u_0} - (hK_u + B_u)(\dot{\phi}_{u_0} + h \ldots \nonumber \\ & \underbrace{(G_u\inv{M}(\tr{G_n}\lambda_{n_1} + \tr{G_r}\lambda_{r_1} + \tr{G_s}\lambda_{s_1} + \tr{G_u}\lambda_{u_1} + \tr{G_b}\lambda_{b_1} + f) + \dot{G}_uv_0)}_{\ddot{\phi}_{u_0}}) \\
\lambda_{u_1} & \geq 0 \nonumber \\
\lambda_{n_1} & \geq - \hat{M}_{n}\dot{G}_{n}v_0 - K_n\phi_{n_0} - (hK_n + B_n)(\dot{\phi}_{n_0} + h \ldots \nonumber \\
& \underbrace{(G_n\inv{M}(\tr{G_n}\lambda_{n_1} + \tr{G_r}\lambda_{r_1} + \tr{G_s}\lambda_{s_1} + \tr{G_u}\lambda_{u_1} + \tr{G_b}\lambda_{b_1} + f) + \dot{G}_nv_0)}_{\ddot{\phi}_{n_0}}) \\
\lambda_{n_1} & \geq 0 \nonumber \\
\lambda_{r_1} & = \beta_r - \hat{M}_{r}\dot{G}_{r}v_0 - K_r\phi_{r_0} - (hK_r + B_r)(\dot{\phi}_{r_0} + h \ldots \nonumber \\
& \underbrace{(G_r\inv{M}(\tr{G_n}\lambda_{n_1} + \tr{G_r}\lambda_{r_1} + \tr{G_s}\lambda_{s_1} + \tr{G_u}\lambda_{u_1} + \tr{G_b}\lambda_{b_1} + f) + \dot{G}_rv_0)}_{\ddot{\phi}_{r_0}}) \\
\lambda_{s_1} & = \beta_s - \hat{M}_{s}\dot{G}_{s}v_0 -K_s\phi_{s_0} - (hK_s + B_s)(\dot{\phi}_{s_0} + h \ldots \nonumber \\
& \underbrace{(G_s\inv{M}(\tr{G_n}\lambda_{n_1} + \tr{G_r}\lambda_{r_1} + \tr{G_s}\lambda_{s_1} + \tr{G_u}\lambda_{u_1} + \tr{G_b}\lambda_{b_1} + f) + \dot{G}_rv_0)}_{\ddot{\phi}_{s_0}}) \\
\lambda_{b_1} & = - \hat{M}_{b}\dot{G}_{b}v_0 -K_b\phi_{b_0} - (hK_b + B_b)(\dot{\phi}_{b_0} + h \ldots \nonumber \\
& \underbrace{(G_b\inv{M}(\tr{G_n}\lambda_{n_1} + \tr{G_r}\lambda_{r_1} + \tr{G_s}\lambda_{s_1} + \tr{G_u}\lambda_{u_1} + \tr{G_b}\lambda_{b_1} + f) + \dot{G}_bv_0)}_{\ddot{\phi}_{b_0}}) \label{eqn:lambda-b-def} \\
\mu^2 \lambda_{n_1}^2 & \geq \lambda_{r_1}^2 + \lambda_{s_1}^2. \nonumber
\end{align}

Next, we replace the $\lambda_{b_1}$ term by first using the identity $\hat{M}_b \equiv \inv{(G_b\inv{M}\tr{G_b})}$ and then rearranging~(\ref{eqn:lambda-b-def}) to:
\begin{align}
(I + (h^2K_b + hB_b)\inv{\hat{M}_{b_0}})\lambda_{b_1} = &\ - \hat{M}_{u}\dot{G}_{u}v_0 - K_b\phi_{b_0} - (hK_b + B_b)(\ldots \nonumber \\
& \qquad \dot{\phi}_{b_0} + h (G_b\inv{M}(\tr{G_n}\lambda_{n_1} + \tr{G_r}\lambda_{r_1} + \ldots \nonumber \\
& \qquad \tr{G_s}\lambda_{s_1} + \tr{G_u}\lambda_{u_1} + f) + \ldots \nonumber \\
& \qquad \dot{G}_bv_0)) + O(h) \label{eqn:lambda-b-def2},
\end{align}
then defining
\begin{align}
c \equiv &\ -\hat{M}_u\dot{G}_uv_0 -K_b\phi_{b_0} - (hK_b + B_b)(\dot{\phi}_{b_0} + h (G_b\inv{M}f + \dot{G}_bv_0) \\
D \equiv &\ -(h^2K_b + hB_b)G_b\inv{M}\\
C \equiv &\ (I - D\tr{G_b})\\
G^* \equiv &\ \tr{\begin{bmatrix}\tr{G_n} & \tr{G_r} & \tr{G_s} & \tr{G_u} \end{bmatrix}}\\
\lambda^*_1 \equiv &\ \tr{\begin{bmatrix} \tr{\lambda_{n_1}} & \tr{\lambda_{r_1}} & \tr{\lambda_{s_1}} & \tr{\lambda_{u_1}} \end{bmatrix}},
\end{align}
allowing us to reformulate~(\ref{eqn:lambda-b-def2}) into $C\lambda_{b_1} = c + D\tr{G^*}\lambda^*_1 + O(h)$ or, equivalently, $\lambda_{b_1} = \inv{C}(c + D\tr{G^*}\lambda^*_1) + O(h)$. Using this latter relationship, we can replace $\lambda_{b_1}$:
\begin{align}
\minimize_{\lambda, \beta}\ & \frac{1}{2}(\zeta \tr{\lambda_{n_1}}\lambda_{n_1} + \zeta \tr{\lambda_{u_1}}{\lambda_{u_1}} + \tr{\lambda_{r_1}}\lambda_{r_1} + \tr{\beta_r}{\beta_r} + \tr{\beta_s}\beta_s) \nonumber \\
\lambda_{u_1} & \geq -\hat{M}_u\dot{G}_uv_0 - K_u\phi_{u_0} - (hK_u + B_u)(\dot{\phi}_{u_0} + h (G_u\inv{M}(\tr{G_n}\lambda_{n_1} + \ldots \nonumber \\
& \quad \tr{G_r}\lambda_{r_1} + \tr{G_s}\lambda_{s_1} + \tr{G_u}\lambda_{u_1} + \tr{G_b}\underbrace{\inv{C}(c + D\tr{G^*}\lambda^*_1 + O(h))}_{\lambda_{b_1}} + f) + \ldots \nonumber \\
& \quad \dot{G}_uv_0)) + O(h)\\
\lambda_{u_1} & \geq 0 \nonumber \\
\lambda_{n_1} & \geq -\hat{M}_n\dot{G}_nv_0 - K_n\phi_{n_0} - (hK_n + B_n)(\dot{\phi}_{n_0} + h (G_n\inv{M}(\tr{G_n}\lambda_{n_1} + \ldots \nonumber \\
& \quad \tr{G_r}\lambda_{r_1} + \tr{G_s}\lambda_{s_1} + \tr{G_u}\lambda_{u_1} + \tr{G_b}\underbrace{\inv{C}(c + D\tr{G^*}\lambda^*_1  + O(h))}_{\lambda_{b_1}} + f) + \ldots \nonumber \\
& \quad \dot{G}_nv_0)) + O(h) \\
\lambda_{n_1} & \geq 0 \nonumber \\
\lambda_{r_1} & = -\hat{M}_r\dot{G}_rv_0 -K_r\phi_{r_0} - (hK_r + B_r)(\dot{\phi}_{r_0} + h (G_r\inv{M}(\tr{G_n}\lambda_{n_1} + \ldots \nonumber \\
& \quad  \tr{G_r}\lambda_{r_1} + \tr{G_s}\lambda_{s_1} + \tr{G_u}\lambda_{u_1} + \tr{G_b}\underbrace{\inv{C}(c + D\tr{G^*}\lambda^*_1  + O(h))}_{\lambda_{b_1}} + f) + \ldots \nonumber \\
& \quad \dot{G}_rv_0)) + \beta_r + O(h) \\
\lambda_{s_1} & = -\hat{M}_s\dot{G}_sv_0 -K_s\phi_{s_0} - (hK_s + B_s)(\dot{\phi}_{s_0} + h (G_s\inv{M}(\tr{G_n}\lambda_{n_1} + \ldots \nonumber \\
& \quad  \tr{G_r}\lambda_{r_1} + \tr{G_s}\lambda_{s_1} + \tr{G_u}\lambda_{u_1} + \tr{G_b}\underbrace{\inv{C}(c + D\tr{G^*}\lambda^*_1 + O(h))}_{\lambda_{b_1}} + f) + \ldots \nonumber \\
& \quad \dot{G}_sv_0)) + \beta_s + O(h) \\
\mu^2 \lambda_{n_1}^2 & \geq \lambda_{r_1}^2 + \lambda_{s_1}^2. \nonumber
\end{align}
We will now remove the subsumed $hO(h)$ terms and the $\lambda^*_1$ terms, replacing the latter with the expansion $\lambda^*_1 = \tr{\begin{bmatrix} \tr{\lambda_{n_1}} & \tr{\lambda_{r_1}} & \tr{\lambda_{s_1}} & \tr{\lambda_{u_1}}\end{bmatrix}}$ temporarily (they will be reintroduced for compactness later).
\begin{align}
\minimize_{\lambda, \beta}\ & \frac{1}{2}(\zeta \tr{\lambda_{n_1}}\lambda_{n_1} + \zeta \tr{\lambda_{u_1}}{\lambda_{u_1}} + \tr{\lambda_{r_1}}\lambda_{r_1} + \tr{\beta_r}{\beta_r} + \tr{\beta_s}\beta_s) \nonumber \\
\lambda_{u_1} & \geq -\hat{M}_u\dot{G}_uv_0 - K_u\phi_{u_0} - (hK_u + B_u)(\dot{\phi}_{u_0} + h (G_u\inv{M}(\tr{G_b}\inv{C}D + I)(\ldots \nonumber \\
& \qquad \tr{G_n}\lambda_{n_1} + \tr{G_r}\lambda_{r_1} + \tr{G_s}\lambda_{s_1} + \tr{G_u}\lambda_{u_1} + \tr{G_b}\inv{C}c + f) + \ldots \nonumber \\
& \qquad \dot{G}_uv_0)) + O(h) \\
\lambda_{u_1} & \geq 0 \nonumber \\
\lambda_{n_1} & \geq -\hat{M}_n\dot{G}_nv_0 - K_n\phi_{n_0} - (hK_n + B_n)(\dot{\phi}_{n_0} + h (G_n\inv{M}(\tr{G_b}\inv{C}D + I)(\ldots \nonumber \\
& \qquad \tr{G_n}\lambda_{n_1} + \tr{G_r}\lambda_{r_1} + \tr{G_s}\lambda_{s_1} + \tr{G_u}\lambda_{u_1} + \tr{G_b}\inv{C}c + f) + \ldots \nonumber \\
& \qquad \dot{G}_nv_0)) + O(h) \\
\lambda_{n_1} & \geq 0 \nonumber \\
\lambda_{r_1} & = -\hat{M}_r\dot{G}_rv_0 -K_r\phi_{r_0} - (hK_r + B_r)(\dot{\phi}_{r_0} + h (G_r\inv{M}(\tr{G_b}\inv{C}D + I)(\ldots \nonumber \\
& \qquad \tr{G_n}\lambda_{n_1} + \tr{G_r}\lambda_{r_1} +  \tr{G_s}\lambda_{s_1} + \tr{G_u}\lambda_{u_1} + \tr{G_b}\inv{C}c + f) + \ldots \nonumber \\
& \qquad \dot{G}_rv_0)) + \beta_r + O(h) \\
\lambda_{s_1} & = -\hat{M}_s\dot{G}_sv_0 -K_s\phi_{s_0} - (hK_s + B_s)(\dot{\phi}_{s_0} + h (G_s\inv{M}(\tr{G_b}\inv{C}D + I)(\ldots \nonumber \\
& \qquad \tr{G_n}\lambda_{n_1} + \tr{G_r}\lambda_{r_1} +  \tr{G_s}\lambda_{s_1} + \tr{G_u}\lambda_{u_1} + \tr{G_b}\inv{C}c + f) + \ldots \nonumber \\
& \qquad \dot{G}_sv_0)) + \beta_s + O(h) \\
\mu^2 \lambda_{n_1}^2 & \geq \lambda_{r_1}^2 + \lambda_{s_1}^2 \nonumber
\end{align}

Finally, we replace the $\beta_r$ and $\beta_s$ terms with
\begin{align}
\beta_r & = \lambda_{r_1} + \hat{M}_r\dot{G}_rv_0 + K_r\phi_{r_0} + (hK_r + B_r)(\dot{\phi}_{r_0} + h (G_r\inv{M}(\tr{G_b}\inv{C}D + I)(\ldots \nonumber \\
& \quad \tr{G_n}\lambda_{n_1} + \tr{G_r}\lambda_{r_1} + \tr{G_s}\lambda_{s_1} + \tr{G_u}\lambda_{u_1} + \tr{G_b}\inv{C}c + f) + \dot{G}_rv_0)) + O(h) \nonumber \\
\beta_s & = \lambda_{s_1} + \hat{M}_s\dot{G}_sv_0 + K_s\phi_{s_0} + (hK_s + B_s)(\dot{\phi}_{s_0} + h (G_s\inv{M}(\tr{G_b}\inv{C}D + I)(\ldots \nonumber \\
& \quad \tr{G_n}\lambda_{n_1} + \tr{G_r}\lambda_{r_1} + \tr{G_s}\lambda_{s_1} + \tr{G_u}\lambda_{u_1} + \tr{G_b}\inv{C}c + f) + \dot{G}_sv_0)) + O(h), \nonumber
\end{align}
and we remove the $\lambda_{r_1}$ and $\lambda_{s_1}$ constraint equations from the convex program as well. We will simplify this statement by grouping $\lambda$ terms and constant terms to make $\beta_r = \lambda_{r_1} + E\lambda^*_1 + d + O(h)$ and $\beta_s = \lambda_{s_1} + F\lambda^*_1 + e + O(h)$, where:
\begin{align}
E \equiv &\ (h^2K_r + hB_r)G_r\inv{M}(\tr{G_b}\inv{C}D + I)\tr{G^*}, \\
F \equiv &\ (h^2K_s + hB_s)G_s\inv{M}(\tr{G_b}\inv{C}D + I)\tr{G^*}, \\
d \equiv &\ \hat{M}_r\dot{G}_rv_0 + K_r\phi_{r_0} + (hK_r + B_r)(\dot{\phi}_{r_0} + h(G_r\inv{M}f + \dot{G}_rv_0)), \\
e \equiv &\ \hat{M}_s\dot{G}_sv_0 + K_s\phi_{s_0} + (hK_s + B_s)(\dot{\phi}_{s_0} + h(G_s\inv{M}f + \dot{G}_sv_0)).
\end{align}
The simplified optimization problem, after removing asymptotic error terms, is now:
\begin{align}
\minimize_{\lambda}\ & \frac{1}{2}(\zeta \tr{\lambda_{n_1}}\lambda_{n_1} + \zeta \tr{\lambda_{u_1}}{\lambda_{u_1}} + \tr{\lambda_{r_1}}(2\lambda_{r_1} + 2E\lambda^*_1 + 2d) +  \tr{\lambda_{s_1}}(2\lambda_{s_1} + \ldots \nonumber \\
& \quad 2F\lambda^*_1 + 2e) + \tr{\lambda^*_1}\tr{E}(E\lambda^*_1 + 2d) + \tr{\lambda^*_1}\tr{F}(F\lambda^*_1 + 2e)) \\
\lambda_{u_1} & \geq - \hat{M}_u\dot{G}_uv_0 - K_u\phi_{u_0} - (hK_u + B_u)(\dot{\phi}_{u_0} + h (G_u\inv{M}((\tr{G_b}\inv{C}D + I)(\ldots \nonumber \\
& \qquad \tr{G_n}\lambda_{n_1} + \tr{G_r}\lambda_{r_1} + \tr{G_s}\lambda_{s_1} + \tr{G_u}\lambda_{u_1}) + \tr{G_b}\inv{C}c + f) + \ldots \nonumber \\
& \qquad \dot{G}_uv_0)) \\
\lambda_{u_1} & \geq 0 \nonumber \\
\lambda_{n_1} & \geq - \hat{M}_n\dot{G}_nv_0 - K_n\phi_{n_0} - (hK_n + B_n)(\dot{\phi}_{n_0} + h (G_n\inv{M}((\tr{G_b}\inv{C}D + I)(\ldots \nonumber \\
& \qquad \tr{G_n}\lambda_{n_1} + \tr{G_r}\lambda_{r_1} + \tr{G_s}\lambda_{s_1} + \tr{G_u}\lambda_{u_1}) + \tr{G_b}\inv{C}c + f) + \ldots \nonumber \\
& \qquad \dot{G}_nv_0)) \\
\lambda_{n_1} & \geq 0 \nonumber \\
\mu^2 \lambda_{n_1}^2 & \geq \lambda_{r_1}^2 + \lambda_{s_1}^2. \nonumber
\end{align}

For convenience, the gradient of the objective function, taken with respect to $\lambda^*_1$ is
\begin{align}
\nabla_{\lambda^*_1} = \begin{bmatrix} \zeta \lambda_{n_1} \\ \lambda_{r_1} \\ \lambda_{s_1} \\ \zeta \lambda_{u_1} \end{bmatrix} + (\begin{bmatrix} 0 \\ I \\ 0 \\ 0 \end{bmatrix} + \tr{E})(\lambda_{r_1} + E\lambda^*_1 + d) + (\begin{bmatrix} 0 \\ 0 \\ I \\ 0 \end{bmatrix} + \tr{F})(\lambda_{s_1} + F\lambda^*_1 + e),
\end{align}
and the Hessian of the objective function is:
\begin{align}
\nabla^2_{\lambda^*_1} = \begin{bmatrix}
\zeta & 0 & 0 & 0 \\
0 & I & 0 & 0 \\
0 & 0 & I & 0 \\
0 & 0 & 0 & \zeta 
\end{bmatrix} +
(\begin{bmatrix} 0 \\ I \\ 0 \\ 0 \end{bmatrix} + \tr{E})(E + \begin{bmatrix} 0 & I & 0 & 0 \end{bmatrix})  +
(\begin{bmatrix} 0 \\ 0 \\ I \\ 0 \end{bmatrix} + \tr{F})(F + \begin{bmatrix} 0 & 0 & I & 0 \end{bmatrix}). \label{eqn:ofn-Hessian}
\end{align}
% \tr{E}E = full matrix
% Selection matrix (applied to 'r' terms)

Also for convenience, I provide the optimization criterion and linear constraints in matrix form.
\begin{tcolorbox}
\begin{align}
\minimize_{\lambda^*_1} & \frac{1}{2}\tr{\lambda^*_1}H\lambda^*_1 + \tr{\lambda^*_1}g \\   
\textrm{subject to } & A\lambda^*_1 \geq q \\
& \lambda_{u_1} \geq 0 \nonumber \\
& \lambda_{n_1} \geq 0 \nonumber \\
& \mu^2 \lambda_{n_1}^2 \geq \lambda_{r_1}^2 + \lambda_{s_1}^2, \nonumber
\end{align}
\end{tcolorbox}
where
\begin{align}
H & \equiv \nabla^2_{\lambda^*_1}, \\
g & \equiv \nabla_{\lambda^*_1}(0) = (\begin{bmatrix} 0 \\ I \\ 0 \\ 0 \end{bmatrix} + \tr{E})d  + (\begin{bmatrix} 0 \\ 0 \\ I \\ 0 \end{bmatrix} + \tr{F})e, \\
A & \equiv \begin{bmatrix} 
\eta G_u\Lambda\tr{G_n} & \eta G_u\Lambda\tr{G_r} & \eta G_u\Lambda\tr{G_s} & \eta G_u\Lambda\tr{G_u} + I \\
I + \chi G_n\Lambda\tr{G_n} & \chi G_n\Lambda\tr{G_r} & \chi G_n\Lambda\tr{G_s} & \chi G_n\Lambda\tr{G_u} \\
\end{bmatrix}, \\  
q & \equiv \begin{bmatrix} -\hat{M}_u\dot{G}_uv_0 - K_u\phi_{u_0} - (hK_u + B_u)(\dot{\phi}_{u_0} + h (G_u\inv{M}(\tr{G_b}\inv{C}c + f) + \dot{G}_uv_0)) \\
    -\hat{M}_n\dot{G}_nv_0 - K_n\phi_{n_0} - (hK_n + B_n)(\dot{\phi}_{n_0} + h (G_n\inv{M}(\tr{G_b}\inv{C}c + f) + \dot{G}_nv_0)) \end{bmatrix}, \\
\eta & \equiv (h^2K_u + hB_u), \\
\chi & \equiv (h^2K_n + hB_n), \\
\Lambda & \equiv \inv{M}(\tr{G_b}\inv{C}D + I),\\
c & \equiv -\hat{M}_b\dot{G}_bv_0 - K_b\phi_{b_0} - (hK_b + B_b)(\dot{\phi}_{b_0} + h (G_b\inv{M}f + \dot{G}_bv_0) \\
D & \equiv -(h^2K_b + hB_b)G_b\inv{M}\\
C & \equiv (I - D\tr{G_b})\\
G^* & \equiv \tr{\begin{bmatrix}\tr{G_n} & \tr{G_r} & \tr{G_s} & \tr{G_u} \end{bmatrix}}\\
\lambda^*_1 & \equiv \tr{\begin{bmatrix} \tr{\lambda_{n_1}} & \tr{\lambda_{r_1}} & \tr{\lambda_{s_1}} & \tr{\lambda_{u_1}} \end{bmatrix}},\\
E & \equiv (h^2K_r + hB_r)G_r\inv{M}(\tr{G_b}\inv{C}D + I)\tr{G^*}, \\
F & \equiv (h^2K_s + hB_s)G_s\inv{M}(\tr{G_b}\inv{C}D + I)\tr{G^*}, \\
d & \equiv \hat{M}_r\dot{G}_rv_0 + K_r\phi_{r_0} + (hK_r + B_r)(\dot{\phi}_{r_0} + h(G_r\inv{M}f + \dot{G}_rv_0)), \\
e & \equiv \hat{M}_s\dot{G}_sv_0 + K_s\phi_{s_0} + (hK_s + B_s)(\dot{\phi}_{s_0} + h(G_s\inv{M}f + \dot{G}_sv_0)).
\end{align}

We can prove \emph{strict convexity} of this QCQP by examining \eqref{eqn:ofn-Hessian}. Both the second and third matrices are positive semi-definite: the second matrix, for example, is a sum of terms $\tr{E}E$ (clearly positive semi-definite, at least), the zero matrix with positive definite submatrix $I$ (clearly positive semi-definite), and a positive semi-definite sub-matrix of $E$. The three terms in \eqref{eqn:ofn-Hessian} thus represent a positive definite, a positive semi-definite, and a positive semi-definite matrix, respectively, which implies that their sum, the Hessian, is a positive definite matrix. Therefore, this QCQP is strictly convex.

Additionally, since $\lambda_{u_1} = \lambda_{n_1} = \infty, \lambda_{r_1} = \lambda_{s_1} = 0$ is an initial feasible point, it is trivial to produce an initial iterate (finding such a point would normally require solving a secondary quadratic program or linear program~\cite{Nocedal:2006}). An approximation to the minimum can be found using an anytime algorithm.

\begin{comment}
\section{\texttt{MATLAB} code}
This Appendix provides the \texttt{MATLAB} code that was used in concert with \texttt{RPI-MATLAB-Simulator} to conduct 3D multibody experiments described in this paper. This code provides a view of how the equations in this paper can be transformed into working code.
\label{appendix:code}

\lstinputlisting[caption=AdvanceDynamics.m]{Drumwrightdynamics.m}
\lstinputlisting[caption=solve\_ofn\_min.m]{solve_ofn_min.m}
\lstinputlisting[caption=solve\_cfn\_min.m]{solve_cfn_min.m}
\end{comment}

\bibliographystyle{abbrv}
\bibliography{paper}

\end{document}